\documentclass[11pt]{article}

\usepackage[margin=1in]{geometry}
\usepackage{amsmath,amssymb,amsthm}
\usepackage{bm,bbm}
\usepackage{graphicx}
\usepackage{xcolor}
\graphicspath{{./figures/}}
\usepackage{subcaption}
\usepackage{tikz}
\usepackage{algorithm}
\usepackage{algpseudocode}
\usepackage{hyperref}

\numberwithin{equation}{section}
\numberwithin{figure}{section}
\numberwithin{table}{section}
\numberwithin{algorithm}{section}

\newtheorem{theorem}{Theorem}[section]
\newtheorem{lemma}[theorem]{Lemma}

\theoremstyle{definition}

\theoremstyle{remark}
\newtheorem{remark}[theorem]{Remark}

\title{Solution of a system of Hamilton-Jacobi equations on a network\thanks{Parts of this manuscript were edited with the assistance of an AI tool (Claude Opus 5.5, Anthropic). The authors are solely responsible for all proofs and statements.}}
\author{Emily R. Hammett\thanks{Department of Mathematics, The Pennsylvania State University, University Park, PA, USA (\texttt{erh5508@psu.edu}).}
\and Maria Teresa Chiri\thanks{Department of Mathematics and Statistics, Queen's University, Kingston, ON, Canada (\texttt{maria.chiri@queensu.ca}).}
\and Kenneth D. Czuprynski\thanks{Applied Research Laboratory, The Pennsylvania State University, University Park, PA, USA (\texttt{kdc168@psu.edu}).}
\and Ludmil T. Zikatanov\thanks{Department of Mathematics, The Pennsylvania State University, University Park, PA, USA (\texttt{ludmil@psu.edu}).}}
\date{}

\begin{document}

\maketitle

\begin{abstract}
We extend the study of Hamilton-Jacobi equations on networks from a single equation to a weakly coupled system of two equations on a star graph, and we develop a numerical scheme for it. Instead of prescribing the boundary values and imposing a junction condition, we compute the solution on each branch from its outer vertex toward the junction and determine the boundary values for which these branchwise solutions agree at the junction. On each branch, we replace the right-hand sides by their cell averages and solve the resulting problems exactly on every cell. This gives an iteration whose limits we compute in closed form. From these limits, the values of the discrete solution at the junction are affine functions of the boundary values on the branch, with a $2 \times 2$ matrix whose rows sum to one and whose determinant factors into explicit contributions of the cells. We use these formulas to characterize the boundary values that give solutions continuous at the junction and monotone on every branch: when the matrices are nonsingular, the admissible boundary values of one branch form a convex polygon, bounded by conditions on the difference of the two components and by positivity conditions, and the boundary values on the other branches follow from them. We also treat the singular case, and we illustrate the results with numerical experiments.
\end{abstract}

\section{Introduction}\label{sec:intro}
We consider weakly coupled systems of Hamilton-Jacobi equations on a network. Schieborn and Camilli \cite{SchiebornCamilli} introduced the network problem for one equation; see also \cite{AchdouCamilliCutriTchou2013, SiconolfiSorrentino2018}. The theory of junction conditions for a single equation was developed in \cite{ImbertMonneau, LionsSouganidis2016}, and the selection of solutions at the junction by vanishing viscosity was studied in \cite{CamilliMarchiSchieborn2013}. Weakly coupled systems of eikonal equations arise, for example, in path planning with random switches between modes of motion \cite{GeeVladimirsky2022}. In this paper, we extend the network setting from a single equation to such systems, and we develop a numerical scheme for them at a junction. We consider a star graph, that is, $M$ branches that meet at a single junction, and on each branch a system of two equations of the form $|u_i'| + C_i (u_i - u_{3-i}) = f_i$, $i = 1, 2$, with positive coupling constants $C_i$ and nonnegative right-hand sides $f_i$.

We do not impose a junction condition. Instead, we ask for which boundary values at the outer vertices the solution on each branch, computed from its own boundary vertex toward the junction, is continuous at the junction and satisfies the monotonicity conditions of the scheme. We call such boundary values admissible.

Our formulation differs from the usual treatment of Hamilton--Jacobi equations on networks, where the boundary data are prescribed and a junction condition is imposed in order to select a solution. Here we consider a different, inverse compatibility problem. No condition is prescribed at the junction. Instead, on each branch we propagate the solution from the outer boundary vertex toward the junction and ask which choices of the outer boundary data produce branchwise solutions that agree there. Thus, continuity at the junction is not used as a selection condition for a problem with fixed boundary data, but rather as a constraint determining the admissible boundary data. Our purpose is therefore not to introduce an alternative notion of solution at a junction, but to characterize explicitly the boundary data for which the independently propagated branchwise solutions are compatible. This viewpoint leads, in the discrete problem considered below, to an explicit affine representation of the values at the junction and ultimately to a finite-dimensional characterization of the set of admissible boundary data.

On each branch, we approximate the right-hand sides by their cell averages. On a cell, each equation of the system then has a constant right-hand side, and its solution is an explicit combination of exponentials. Coupling the two equations through cell averages gives an iteration, which we write in matrix form with bidiagonal matrices; its limits are available in closed form (Algorithm~\ref{alg:ch_2_sys-genC-nonhom-limit}). Monotone schemes of this type go back to Crandall and Lions \cite{CrandallLions1984}; each single equation can also be solved by the fast marching method \cite{Sethian}. For a single equation on a network or a junction, approximation schemes and their convergence were studied in \cite{CamilliFestaSchieborn2013, CostesequeLebacqueMonneau2015}, and error estimates were obtained in \cite{GuerandKoumaiha2019}.

The main results concern the star graph. The values of the discrete solution at the junction are $\mathfrak{a}_+^{(m)} + \mathfrak{D}_+^{(m)} \mathfrak{g}_+^{(m)}$, where $\mathfrak{g}_+^{(m)}$ is the pair of boundary values on the branch $J_m$. The entries of the $2 \times 2$ matrix $\mathfrak{D}_+^{(m)}$ are positive and its rows sum to one (Lemma~\ref{lem:bc_positive} and Theorem~\ref{thrm:app_C_mathfrakD_matrix_rank_one}). Its determinant is a product of explicit factors, one for each cell, and $\mathfrak{D}_+^{(m)}$ is singular if and only if $\mathfrak{B}(C_{1, m} h_j) + \mathfrak{B}(C_{2, m} h_j) = 1$ for some cell, where $\mathfrak{B}$ is the Bernoulli function (Theorem~\ref{thrm:det_D_product}). When all these matrices are nonsingular, continuity at the junction determines the boundary values on every branch from those on the first branch (Lemma~\ref{lem:rev_continuity}). The monotonicity conditions depend on the boundary values only through the difference of the two components (Lemma~\ref{lem:rev_strip}), and the admissible boundary values form a convex polygon in the plane: a strip parallel to $(1, 1)^T$ cut by positivity conditions (Theorem~\ref{thm:rev_admissible}). Finding an admissible pair is a linear feasibility problem in two variables (Algorithm~\ref{alg:network_system_rev}). When a matrix $\mathfrak{D}_+^{(m)}$ is singular, the difference of the two components at the junction is fixed by the data on that branch, and continuity requires an additional condition on the boundary values of the other branches. In our experiments, the discrete solutions converge at second order in the mesh size, both on an interval and on a star graph.

The rest of the paper is organized as follows. In \S\ref{sec:prelim}, we state the problems on a star network and on an interval, and we introduce the discretization. In \S\ref{sec:schemes}, we present the schemes for approximating the solution to a coupled system of Hamilton-Jacobi equations, together with the algorithms that compute its solution; we show monotonicity of the iteration on an interval, which carries over branch by branch to a star graph. In \S\ref{chapter3:network}, we compute the values at the junction obtained from the boundary data on each branch, we study the continuity of the solutions at the junction, and we obtain explicit conditions on the boundary data under which a continuous solution exists. We also design an algorithm for the computation of admissible boundary values. In \S\ref{sec:experiments}, we present numerical experiments on an interval and on star graphs.

\section{Problem formulation and discretization}\label{sec:prelim}

\subsection{Hamilton-Jacobi systems on a star graph}\label{sec:prelim_network}
We consider a network with a single junction and $M$ edges (branches) intersecting at it, that is, a star graph, which we illustrate in Figure \ref{fig:ch_2_junction}. We denote the branches by $J_m$ for $m = 1, \ldots, M$, and we label the junction the origin, that is, $\cap_{m = 1}^M J_m = \{ \bm 0 \}$. Let $\bm x_m : I_m \to J_m$ denote the arc length parameterization of the branch $J_m$, where $I_m = [0, \ell_m]$ with $\ell_m$ being the length of the branch $J_m$, and $\bm x_m(0) = \bm 0$. We consider the problem
\begin{equation}\label{eq:net_system}
    \begin{split}
        |\widetilde{u}_{1, m}'| + C_{1, m} (\widetilde{u}_{1, m} - \widetilde{u}_{2, m}) &= f_{1, m}(s), \quad s \in I_m, \\
        |\widetilde{u}_{2, m}'| + C_{2, m} (\widetilde{u}_{2, m} - \widetilde{u}_{1, m}) &= f_{2, m}(s), \quad s \in I_m, \\
        \widetilde{u}_{1, m}(\ell_m) &= g_{1, m}(\ell_m), \quad \widetilde{u}_{2, m}(\ell_m) = g_{2, m}(\ell_m),
    \end{split}
\end{equation}
for $m = 1, \ldots, M$, where $C_{j, m}$ are {positive} constants, and $f_{j, m}$ are continuous functions for $j = 1, 2$. The unknowns are two functions $u_j : \cup_{m=1}^M J_m \to \mathbb{R}$, $j = 1, 2$. We denote by $u_{j, m} : J_m \to \mathbb{R}$ the restriction of $u_j$ to the branch $J_m$, and by $\widetilde{u}_{j,m}(s) = u_{j,m}(\bm x_m(s))$, $s \in I_m$, these restrictions as functions of the arc length. The boundary values are prescribed at $\bm x_m(\ell_m)$ for $m = 1, \ldots, M$. At the junction, we do not prescribe a boundary value. Rather, we prescribe the continuity condition
\begin{equation}
    u_{j, p}(\bm 0) = u_{j, q}(\bm 0),
\end{equation}
for $j = 1, 2$ and $p, q = 1, \ldots, M$. We consider propagating data from each of the boundaries.

\begin{figure}[ht]
    \centering
    \begin{tikzpicture}
        \draw (-2, 1) -- (0, 0);
        \draw (-2, -1) -- (0, 0);
        \draw (0, 0) -- (2.24, 0);
        \draw (0, 0) -- (1, 2);

        \draw[ultra thick, dashed, shorten <=1mm, shorten >=1mm] (1.2, 0) arc[start angle=0, end angle=63.4, radius=1.2cm];

        \node at (-1, 1) {$J_1$};
        \node at (-1, -1) {$J_2$};
        \node at (1.2, -0.4) {$J_3$};
        \node at (0.1, 1.25) {$J_M$};

        \node at (0, -0.5) {$\bm 0$};
        \filldraw[black] (0, 0) circle (2pt);

        \node at (-2, 1.5) {$\bm x_1(\ell_1)$};
        \filldraw[black] (-2, 1) circle (2pt);
        \node at (-2, -1.5) {$\bm x_2(\ell_2)$};
        \filldraw[black] (-2, -1) circle (2pt);
        \node at (3, 0) {$\bm x_3(\ell_3)$};
        \filldraw[black] (2.24, 0) circle (2pt);
        \node at (1.5, 2.5) {$\bm x_M(\ell_M)$};
        \filldraw[black] (1, 2) circle (2pt);
    \end{tikzpicture}
    \caption{A junction with branches}
    \label{fig:ch_2_junction}
\end{figure}


We study the following problem. The boundary values $g_{i, m}(\ell_m)$ in \eqref{eq:net_system} are unknown. We seek the boundary values for which the solution on each branch $J_m$ is determined by the boundary data at its own boundary vertex $\bm x_m(\ell_m)$ all the way to the junction, and for which the resulting functions satisfy the continuity conditions at the junction. We call such boundary values admissible. For admissible boundary values, the junction is the point where the characteristics from all boundary vertices meet.

\subsection{The system on a branch}\label{chapter2:1D}
On each branch $J_m$, the parameterization $\bm x_m$ turns \eqref{eq:net_system} into a system on the interval $I_m$. We therefore solve, on every branch, a system of the following form:
\begin{equation}\label{eq:ch_2_1D_gen_system}
    \begin{split}
        |u_1'| + C_1 (u_1 - u_2) &= f_1(x), \quad x \in (a, b), \\
        |u_2'| + C_2 (u_2 - u_1) &= f_2(x), \quad x \in (a, b), \\
        u_1 (x) &= g_1 (x), \quad u_2 (x) = g_2 (x), \quad x \in \{a, b\},
    \end{split}
\end{equation}
where $a < b$; $C_i$ is {a positive} constant for $i = 1, 2$; $g_i$ are given for $i = 1, 2$; $f_i$ is known for $i = 1, 2$; and $f_i(x) \geq 0$ for $i = 1, 2$. 
The above system consists of two weakly coupled Hamilton-Jacobi equations, where the Hamiltonians are
\begin{equation}
    H_i(u_i', x) = |u_i'| - f_i(x).
\end{equation}
Our approach is to construct an iterative scheme that converges to the solution of the system~\eqref{eq:ch_2_1D_gen_system}.

\subsection{Solving a single equation with constant right-hand side}\label{sec:single_const}
The discretization of \eqref{eq:ch_2_1D_gen_system} replaces the right-hand sides by constants on the cells of a partition. The solution of the system on an interval therefore requires the viscosity solution of a single equation with constant right-hand side on an interval $(a, b)$:
\begin{equation}\label{eq:single_const}
    \begin{aligned}
        &|u'(x)| + C_1 u(x) = C, \quad x \in (a, b), \\
        &u(a) = g_a, \qquad u(b) = g_b,
    \end{aligned}
\end{equation}
where $C_1 > 0$ and $C \geq 0$ are constants. Suppose that $C_1 g_a \leq C$ and $C_1 g_b \leq C$. Let
\begin{equation}\label{eq:single_const_vw}
    \begin{aligned}
        v(x) &= \frac{C}{C_1} + \left( g_a - \frac{C}{C_1} \right) \exp(-C_1 (x - a)), \\
        w(x) &= \frac{C}{C_1} + \left( g_b - \frac{C}{C_1} \right) \exp(-C_1 (b - x)).
    \end{aligned}
\end{equation}
If, in addition, $v(b) \geq g_b$ and $w(a) \geq g_a$, then the viscosity solution of \eqref{eq:single_const} is their minimum,
\begin{equation}\label{eq:ch_2_1D_1_eq_C1_C2_sol}
    \begin{aligned}
        u^*(x) &= \min\left\{ v(x), w(x) \right\} \\
        &= \frac{C}{C_1} + \min\left\{ \left( g_a - \frac{C}{C_1} \right) \exp(-C_1 (x - a)), \left( g_b - \frac{C}{C_1} \right) \exp(-C_1 (b - x)) \right\}.
    \end{aligned}
\end{equation}
We use this solution as a model to construct an iterative scheme 
and numerically solve the problem~\eqref{eq:ch_2_1D_gen_system}.
From now on, without loss of generality, we take $a = -1$ and $b = 1$.

\subsection{Partitions, cell averages, and matrix notation}\label{sec:discretization}
On $(-1, 1)$, we use the following notation.
\begin{itemize}
    \item We set $T_j=(x_{j-1},x_{j})$, $x_0=-1$, $x_N=1$, $x_j=x_{j-1}+h_j$, $j=1:N$, and $\bm h=\{h_j\}_{j=1}^N$.
    \item For a right-hand side $f$, we denote $F_j=\frac{1}{|T_j|}\int_{T_j}f(t) dt$ and $\bm F=\{F_j\}_{j=1}^N$. For right-hand sides $f_i$, $i = 1, 2$, we denote $F_{i,j}=\frac{1}{|T_j|}\int_{T_j}f_i(t) dt$ and $\bm F_i=\{F_{i,j}\}_{j=1}^N$.
\end{itemize}
We define the $N \times N$ matrices
\begin{equation}\label{eq:ch_2_Apm}
    \begin{aligned}
        A_- &:= I - \sum_{j=2}^{N} \exp(-C_1 h_j)\, \bm e_j \bm e_{j-1}^T, &\qquad P_- &:= \sum_{j=2}^{N} \bm e_j \bm e_{j-1}^T,\\
        A_+ &:= I - \sum_{j=1}^{N-1} \exp(-C_1 h_j)\, \bm e_j \bm e_{j+1}^T, &\qquad P_+ &:= \sum_{j=1}^{N-1} \bm e_j \bm e_{j+1}^T,
    \end{aligned}
\end{equation}
where{, since $C_1 > 0$,} $A_\pm$ are diagonally dominant two-diagonal M-matrices and $P_\pm$ are the shift matrices.
For a constant $C_i$, we denote by $A_{i, \pm}$ the matrices $A_\pm$ with $C_1$ replaced by $C_i$. These matrices have the form $I - X$, where $X$ is strictly lower triangular for $A_{i, -}$ and strictly upper triangular for $A_{i, +}$. Applying $(I - X)^{-1} = \sum_{\ell = 0}^{N-1} X^\ell$, we get
\begin{equation}
    \left( A_{i, -}^{-1} \right)_{j \ell} = \exp\left( -C_i \sum_{s = \ell+1}^{j} h_s \right) \quad \text{for } \ell \leq j, \qquad \left( A_{i, +}^{-1} \right)_{j \ell} = \exp\left( -C_i \sum_{s = j}^{\ell-1} h_s \right) \quad \text{for } \ell \geq j,
\end{equation}
and all other entries are zero.
With the well-known Bernoulli function $\mathfrak{B}$, we set
\begin{equation}\label{eq:bernoulli}
    \mathfrak{B}(t)=\begin{cases} 
        \frac{t}{e^t-1} & t\neq 0,\\
        1 & t=0,
    \end{cases}
    \qquad
    \beta_{i, j} = 1-\frac{1}{\mathfrak{B}(-C_ih_j)}, \quad \bm \beta_i = \{ \beta_{i,j} \}_{j=1}^N.
\end{equation}
Following the direction of the parameterization, let $\bm h^{(m)}$ be the vector $\bm h$ on the branch $J_m$, where we index the components of $\bm h^{(m)}$ by $1, \ldots, N_m$. We denote by $T_j^{(m)}$, $j = 1:N_m$, the corresponding cells of $I_m$, and by $\bm F_i^{(m)}$ the vectors of averages on these cells of the right-hand sides $f_{i, m}$ in \eqref{eq:net_system}. We denote by $A_{i, \pm}^{(m)}$ the matrices $A_\pm$ in \eqref{eq:ch_2_Apm} with $C_1$, $\bm h$, and $N$ replaced by $C_{i, m}$, $\bm h^{(m)}$, and $N_m$.

\subsection{The iteration for the system}\label{sec:system_iteration}
We propose an iteration to solve the system. We take a piecewise-constant approximation to the system given in equation \eqref{eq:ch_2_1D_gen_system} with general right-hand side $f_1(x), f_2(x) \geq 0$ for $x \in (-1, 1)$. We start by moving a piecewise-constant approximation of $u_2$ to the right-hand side of the first equation. We then solve the first equation for $u_1$. Approximating the updated $u_1$ by a piecewise constant and moving it to the right-hand side of the second equation, we then solve the second equation for $u_2$. We use known (model) solutions in simple cases to define the iterates.

We use the partition $\{T_j\}_{j=1}^N$ and the vector $\bm h$ defined in \S\ref{sec:discretization}, and we consider the following iteration:
\begin{itemize}
    \item The functions $v_1$ and $v_2$ correspond to the solution from the left, and the functions $w_1$ and $w_2$ correspond to the solution from the right.
    \item We take as boundary conditions for $k=1,2,\ldots$:
    \begin{equation}
        \begin{aligned}
            &v_1^{(k)}(x_0) = g_1(x_0), \quad v_2^{(k)}(x_0) = g_2(x_0), \\
            &w_1^{(k)}(x_N) = g_1(x_N), \quad w_2^{(k)}(x_N) = g_2(x_N).
        \end{aligned}
    \end{equation}
    \item We take any initial guesses satisfying these boundary conditions.
    \item Starting from the left and the right end of the interval, respectively, we define $v_{1,2}^{(k)}$ and $w_{1,2}^{(k)}$ as follows:
    \begin{equation}
        \begin{aligned}
            &\left|\frac{d v_1^{(k)}}{dx}\right|+C_1v_1^{(k)} =F_{1,j}+C_1b_{j,k-1}, \quad \mbox{on}\quad T_j,\quad 
            v_{1}^{(k)}(x_{j-1}) \quad\mbox{given},\\
            &\left|\frac{d v_2^{(k)}}{dx}\right|+C_2v_2^{(k)} =F_{2,j}+C_2a_{j,k}, \quad\mbox{on}\quad T_j,\quad  v_{2}^{(k)}(x_{j-1}) \quad\mbox{given},\\
            &\left|\frac{d w_1^{(k)}}{dx}\right|+C_1w_1^{(k)} =F_{1,j}+C_1d_{j,k-1}, \quad \mbox{on}\quad T_j,\quad 
            w_{1}^{(k)}(x_{j}) \quad\mbox{given},\\
            &\left|\frac{d w_2^{(k)}}{dx}\right|+C_2w_2^{(k)} =F_{2,j}+C_2c_{j,k} \quad\mbox{on}\quad T_j,\quad  w_{2}^{(k)}(x_{j}) \quad\mbox{given}.
        \end{aligned}
    \end{equation}
    \item Here, we have denoted 
    \begin{equation}\label{eq:ch_2_abcd-def}
        \begin{aligned}
            &a_{j,k}=\frac{1}{|T_j|}\int_{T_j} v_1^{(k)},\quad 
            b_{j,k}=\frac{1}{|T_j|}\int_{T_j} v_2^{(k)},\\    
            &c_{j,k}=\frac{1}{|T_j|}\int_{T_j} w_1^{(k)},\quad    
            d_{j,k}=\frac{1}{|T_j|}\int_{T_j} w_2^{(k)}.\\    
        \end{aligned}
    \end{equation}
\end{itemize}

\section{Discrete schemes and their solution}\label{sec:schemes}

\subsection{Discretization of a system and its iterative solution}\label{sec:ch_2_1D_system}\label{subsec:ch_2_1D_system_scheme}
By \S\ref{sec:single_const}, we know the solutions to each of the differential problems defining the iteration in \S\ref{sec:system_iteration}. On $T_j$, the function $v_1^{(k)}$ is the function $v$ in \eqref{eq:single_const_vw} with $(a, b) = T_j$, $C = F_{1,j} + C_1 b_{j,k-1}$, and $g_a = v_1^{(k)}(x_{j-1})$, and $w_1^{(k)}$ is the function $w$ in \eqref{eq:single_const_vw} with $(a, b) = T_j$, $C = F_{1,j} + C_1 d_{j,k-1}$, and $g_b = w_1^{(k)}(x_j)$. The functions $v_2^{(k)}$ and $w_2^{(k)}$ are obtained in the same way, with $C_1$ replaced by $C_2$ and with $C = F_{2,j} + C_2 a_{j,k}$ and $C = F_{2,j} + C_2 c_{j,k}$, respectively. Averaging these functions over $T_j$ and using $\left(1-\exp(-C_ih_j)\right)/(C_ih_j) = 1/\mathfrak{B}(-C_ih_j) = 1-\beta_{i,j}$ with $\beta_{i, j}$ given in \eqref{eq:bernoulli} then show that
\begin{equation}\label{eq:ch_2_abcd-comp-genC}
    \begin{aligned}
        a_{j,k} & =\beta_{1, j} \frac{\left(F_{1,j}+C_1b_{j,k-1}\right)}{C_1}+ (1-\beta_{1, j})v_{1}^{(k)}(x_{j-1}),\\
        b_{j,k} & =\beta_{2, j} \frac{\left(F_{2,j}+C_2a_{j,k}\right)}{C_2} + (1-\beta_{2, j})v_{2}^{(k)}(x_{j-1}),\\
        c_{j,k} & =\beta_{1, j}\frac{\left(F_{1,j}+C_1d_{j,k-1}\right)}{C_1} + (1-\beta_{1, j})w_{1}^{(k)}(x_{j}),\\
        d_{j,k} & =\beta_{2, j}\frac{\left(F_{2,j}+C_2c_{j,k}\right)}{C_2} + (1-\beta_{2, j})w_{2}^{(k)}(x_{j}).
    \end{aligned}
\end{equation}
Evaluating these functions at $x = x_j$ or $x = x_{j-1}$, as appropriate, we have
\begin{equation}\label{eq:ch_2_differences_vw}
    \begin{aligned}
        v^{(k)}_1(x_j)-\exp(-C_1h_j)v^{(k)}_1(x_{j-1}) &= \frac{\left(F_{1,j}+C_1b_{j,k-1}\right)}{C_1}(1-\exp(-C_1h_j)), \\
        v^{(k)}_2(x_j)-\exp(-C_2h_j)v^{(k)}_2(x_{j-1}) &= \frac{\left(F_{2,j}+C_2a_{j,k}\right)}{C_2}(1-\exp(-C_2h_j)), \\
        w^{(k)}_1(x_{j-1})-\exp(-C_1h_j)w^{(k)}_1(x_{j}) &= \frac{\left(F_{1,j}+C_1d_{j,k-1}\right)}{C_1}(1-\exp(-C_1h_j)), \\
        w^{(k)}_2(x_{j-1})-\exp(-C_2h_j)w^{(k)}_2(x_{j}) &= \frac{\left(F_{2,j}+C_2c_{j,k}\right)}{C_2}(1-\exp(-C_2h_j)). \\ 
    \end{aligned}
\end{equation}

\subsection{On the convergence of the iterations}\label{sec:ch_2_specific_case}\label{subsec:ch_2_1D_system_convergence}
We can compute $v^{(k)}_{1, 2}(x_j)$ for $j = 1:N$ and $w^{(k)}_{1, 2}(x_j)$ for $j=0:(N-1)$ by solving lower (upper) triangular systems with the matrices $A_{i, \pm}$ from \S\ref{sec:discretization}. Furthermore, we denote the vectors
\begin{equation}
    \begin{aligned}
        \bm a^{(k)}=\{a_{j,k}\}_{j=1}^{N}, &\quad \bm b^{(k)}=\{b_{j,k}\}_{j=1}^{N}, \\
        \bm c^{(k)}=\{c_{j,k}\}_{j=1}^{N}, &\quad \bm d^{(k)}=\{d_{j,k}\}_{j=1}^{N}, \\
        \bm F_i=\{F_{i,j}\}_{j=1}^N, &\quad \bm \beta_i = \{ \beta_{i,j} \}_{j=1}^N, \quad \bm \phi_i=\operatorname{diag}(\bm \beta_i) \bm F_i, \\
        \bm v_{1,2}^{(k)}=\{v_{1,2}^{(k)}(x_j)\}_{j=1}^{N}, &\quad \bm w_{1,2}^{(k)}=\{w_{1,2}^{(k)}(x_j)\}_{j=0}^{N-1}.
    \end{aligned}
\end{equation}
Writing the relations \eqref{eq:ch_2_differences_vw} and \eqref{eq:ch_2_abcd-comp-genC} in terms of these vectors gives the iteration in matrix form, which we now state.

To simplify the formulas for the iterates and the limits, we define the matrices
\begin{equation}\label{eq:ch_2_M}
    M_{i, \pm} = A_{i, \pm}^{-1} (I - \operatorname{diag}(\exp(-C_i \bm h))).
\end{equation}
With this, we can write $Z_{i, \pm}$ as
\begin{equation}\label{eq:ch_2_Z_i_pm}
    Z_{i, \pm} = \operatorname{diag}(\bm \beta_i) + (I - \operatorname{diag}(\bm \beta_i)) P_{\pm} M_{i, \pm}.
\end{equation}
Further, define the following vectors:
\begin{equation}\label{eq:ch_2_omega}
    \begin{split}
        \bm \omega_{i, -} &= \frac{1}{C_i} M_{i, -} \bm F_i + g_i(x_0) \exp(-C_i h_1) A_{i, -}^{-1} \bm e_1, \\
        \bm \omega_{i, +} &= \frac{1}{C_i} M_{i, +} \bm F_i + g_i(x_N) \exp(-C_i h_N) A_{i, +}^{-1} \bm e_N,
    \end{split}
\end{equation}
and
\begin{equation}\label{eq:ch_2_a0b0c0d0}
    \begin{aligned}
        \bm a_1 &= \frac{1}{C_1} \bm \phi_1 + (I - \operatorname{diag}(\bm \beta_1)) \left( g_1(x_0) \bm e_1 + P_- \bm \omega_{1, -} \right), \\
        \bm b_1 &= \frac{1}{C_2} \bm \phi_2 + (I - \operatorname{diag}(\bm \beta_2)) \left( g_2(x_0) \bm e_1 + P_- \bm \omega_{2, -} \right), \\
        \bm c_1 &= \frac{1}{C_1} \bm \phi_1 + (I - \operatorname{diag}(\bm \beta_1)) \left( g_1(x_N) \bm e_N + P_+ \bm \omega_{1, +} \right), \\
        \bm d_1 &= \frac{1}{C_2} \bm \phi_2 + (I - \operatorname{diag}(\bm \beta_2)) \left( g_2(x_N) \bm e_N + P_+ \bm \omega_{2, +} \right).
    \end{aligned}
\end{equation}
These quantities are independent of the iteration $k$. For the set of values propagating from the left, the iterates are
\begin{equation}
    \begin{split}
        \bm v_1^{(k)} &= M_{1, -} \bm b^{(k-1)} + \bm \omega_{1, -}, \\
        \bm a^{(k)} &= \operatorname{diag}(\bm \beta_1) \bm b^{(k-1)} + (I - \operatorname{diag}(\bm \beta_1)) \left( g_1(x_0) \bm e_1 + P_- \bm v_1^{(k)} \right) + \frac{1}{C_1} \bm \phi_1, \\
        \bm v_2^{(k)} &= M_{2, -} \bm a^{(k)} + \bm \omega_{2, -}, \\
        \bm b^{(k)} &= \operatorname{diag}(\bm \beta_2) \bm a^{(k)} + (I - \operatorname{diag}(\bm \beta_2)) \left( g_2(x_0) \bm e_1 + P_- \bm v_2^{(k)} \right) + \frac{1}{C_2} \bm \phi_2.
    \end{split}
\end{equation}
Combining the above relations, we get
\begin{equation}
    \bm a^{(k)} = Z_{1, -} \bm b^{(k-1)} + \bm a_1, \qquad
    \bm b^{(k)} = Z_{2, -} \bm a^{(k)} + \bm b_1.
\end{equation}
We can rearrange these to get
\begin{equation}
    \begin{split}
        \bm a^{(k)} &= Z_{1, -} Z_{2, -} \bm a^{(k-1)} + Z_{1, -} \bm b_1 + \bm a_1, \\
        \bm b^{(k)} &= Z_{2, -} Z_{1, -} \bm b^{(k-1)} + Z_{2, -} \bm a_1 + \bm b_1.
    \end{split}
\end{equation}
By \eqref{eq:ch_2_Z_i_pm}, the matrices $Z_{i, -}$ are lower triangular and the matrices $Z_{i, +}$ are upper triangular. {Since} $C_i > 0$, we have $\mathfrak{B}(-C_i h_j) > 1$, and therefore
\begin{equation}\label{eq:ch_2_diag-eig}
    \begin{aligned}
        & \left( Z_{i, \pm} \right)_{jj} = \beta_{i, j} = 1 - \frac{1}{\mathfrak{B}(-C_i h_j)} \in (0, 1), \\
        & \rho(Z_{2, \pm} Z_{1, \pm}) = \rho(Z_{1, \pm} Z_{2, \pm}) = \max_{j = 1:N} \beta_{1, j} \beta_{2, j} < 1.
    \end{aligned}
\end{equation}
Hence the iterations converge for every vector $\bm h$ defined in \S\ref{sec:discretization}, and the limits are independent of the initial values $\bm b^{(0)}$ and $\bm d^{(0)}$.

In the limit, we obtain the following:
\begin{equation}\label{eq:ch_2_v1av2b_limits_g}
    \begin{split}
        \bm v_1^{(\infty)} &= M_{1, -} \left( I - Z_{2, -} Z_{1, -} \right)^{-1} \left( Z_{2, -} \bm a_1 + \bm b_1 \right) + \bm \omega_{1, -}, \\
        \bm v_2^{(\infty)} &= M_{2, -} \left( I - Z_{1, -} Z_{2, -} \right)^{-1} \left( Z_{1, -} \bm b_1 + \bm a_1 \right) + \bm \omega_{2, -}.
    \end{split}
\end{equation}
For the set of values propagating from the right, the iterates are
\begin{equation}
    \begin{split}
        \bm w_1^{(k)} &= M_{1, +} \bm d^{(k-1)} + \bm \omega_{1, +}, \\
        \bm c^{(k)} &= \operatorname{diag}(\bm \beta_1) \bm d^{(k-1)} + (I - \operatorname{diag}(\bm \beta_1)) \left( g_1(x_N) \bm e_N + P_+ \bm w_1^{(k)} \right) + \frac{1}{C_1} \bm \phi_1, \\
        \bm w_2^{(k)} &= M_{2, +} \bm c^{(k)} + \bm \omega_{2, +}, \\
        \bm d^{(k)} &= \operatorname{diag}(\bm \beta_2) \bm c^{(k)} + (I - \operatorname{diag}(\bm \beta_2)) \left( g_2(x_N) \bm e_N + P_+ \bm w_2^{(k)} \right) + \frac{1}{C_2} \bm \phi_2.
    \end{split}
\end{equation}
By similar computations as given above, in the limit, we obtain the following:
\begin{equation}\label{eq:ch_2_w1cw2d_limits_g}
    \begin{split}
        \bm w_1^{(\infty)} &= M_{1, +} \left( I - Z_{2, +} Z_{1, +} \right)^{-1} \left( Z_{2, +} \bm c_1 + \bm d_1 \right) + \bm \omega_{1, +}, \\
        \bm w_2^{(\infty)} &= M_{2, +} \left( I - Z_{1, +} Z_{2, +} \right)^{-1} \left( Z_{1, +} \bm d_1 + \bm c_1 \right) + \bm \omega_{2, +}.
    \end{split}
\end{equation}
Since we know the limits of the iterates, we can compute these limits directly, as in Algorithm \ref{alg:ch_2_sys-genC-nonhom-limit}.
\begin{algorithm}[H]
    \caption{Limits of the iterations from the left and from the right for a system}\label{alg:ch_2_sys-genC-nonhom-limit}
    \begin{algorithmic}
    \Function{CalculateLimits}{}
    \State Compute $\bm{\beta}_i=\{\beta_{i,j}\}_{j=1}^N$, $\bm F_i$, and $\bm \phi_i$;
    \State Compute $M_{i, \pm}$, $\bm \omega_{i, \pm}$, $\bm a_1$, $\bm b_1$, $\bm c_1$, $\bm d_1$;
    \State $\bm v_1^{(\infty)} = M_{1, -} \left( I - Z_{2, -} Z_{1, -} \right)^{-1} \left( Z_{2, -} \bm a_1 + \bm b_1 \right) + \bm \omega_{1, -}$;
    \State $\bm w_1^{(\infty)} = M_{1, +} \left( I - Z_{2, +} Z_{1, +} \right)^{-1} \left( Z_{2, +} \bm c_1 + \bm d_1 \right) + \bm \omega_{1, +}$;
    \State $\bm v_2^{(\infty)} = M_{2, -} \left( I - Z_{1, -} Z_{2, -} \right)^{-1} \left( Z_{1, -} \bm b_1 + \bm a_1 \right) + \bm \omega_{2, -}$;
    \State $\bm w_2^{(\infty)} = M_{2, +} \left( I - Z_{1, +} Z_{2, +} \right)^{-1} \left( Z_{1, +} \bm d_1 + \bm c_1 \right) + \bm \omega_{2, +}$;
    \EndFunction
    \end{algorithmic}
\end{algorithm}

The limits in Algorithm \ref{alg:ch_2_sys-genC-nonhom-limit} are used in \S\ref{sec:junction_values}, where only the iteration from one end of the interval is needed. To compute the solution of \eqref{eq:ch_2_1D_gen_system} on $(-1, 1)$, we couple each equation to the other through the solution itself. We denote by $\overline{\bm u}_i = \left\{ \left( u_i(x_{j-1}) + u_i(x_j) \right)/2 \right\}_{j=1}^N$ the vector of cell averages of $u_i$ computed by the trapezoidal rule, and we iterate as in Algorithm \ref{alg:ch_2_sys_general}, where each step solves a single equation with piecewise-constant right-hand side, from both ends of the interval, with $M_{i, \pm}$ in \eqref{eq:ch_2_M} and $\bm \omega_{i, \pm}$ in \eqref{eq:ch_2_omega}. We do not prove convergence of this iteration.
\begin{algorithm}[H]
    \caption{Iterative solution of the system \eqref{eq:ch_2_1D_gen_system}}\label{alg:ch_2_sys_general}
    \begin{algorithmic}
    \Function{SolveSystem}{$\bm u_1^{(0)}, \bm u_2^{(0)}, \varepsilon$}
    \For{$k = 1, 2, \ldots$}
    \State $\bm v_1 = M_{1, -} \overline{\bm u}_2^{(k-1)} + \bm \omega_{1, -}$, \quad $\bm w_1 = M_{1, +} \overline{\bm u}_2^{(k-1)} + \bm \omega_{1, +}$;
    \State $\bm u_1^{(k)} = \min\{ [g_1(x_0); \bm v_1], [\bm w_1; g_1(x_N)] \}$;
    \State $\bm v_2 = M_{2, -} \overline{\bm u}_1^{(k)} + \bm \omega_{2, -}$, \quad $\bm w_2 = M_{2, +} \overline{\bm u}_1^{(k)} + \bm \omega_{2, +}$;
    \State $\bm u_2^{(k)} = \min\{ [g_2(x_0); \bm v_2], [\bm w_2; g_2(x_N)] \}$;
    \If{$\max_{i = 1, 2} \left\| \bm u_i^{(k)} - \bm u_i^{(k-1)} \right\|_\infty \leq \varepsilon$}
        \State \Return $\bm u_1^{(k)}$, $\bm u_2^{(k)}$;
    \EndIf
    \EndFor
    \EndFunction
    \end{algorithmic}
\end{algorithm}

\begin{remark}\label{rmk:fmm}
In Algorithm \ref{alg:ch_2_sys_general}, the solution of each single equation can also be computed by the fast marching method \cite{Sethian}. Starting from both endpoints, at each step one accepts the node with the smaller tentative value, computed from its accepted neighbor by the solution \eqref{eq:ch_2_1D_1_eq_C1_C2_sol} on the cell between them. When $\bm v_i$ is nondecreasing and $\bm w_i$ is nonincreasing, this gives the same vector $\bm u_i^{(k)}$ as Algorithm \ref{alg:ch_2_sys_general}.
\end{remark}

\subsection{Monotonicity of the iterates}\label{sec:analysis}\label{sec:analysis_system}
\begin{lemma}\label{lem:ch_2_1D_2_eq_monotone_scheme}
Suppose that
\begin{equation}
    \begin{aligned}
        F_{1, j} + C_1 b_{j, k-1} &\geq C_1 v_1^{(k)}(x_{j-1}), &\qquad F_{2, j} + C_2 a_{j, k} &\geq C_2 v_2^{(k)}(x_{j-1}), \\
        F_{1, j} + C_1 d_{j, k-1} &\geq C_1 w_1^{(k)}(x_j), &\qquad F_{2, j} + C_2 c_{j, k} &\geq C_2 w_2^{(k)}(x_j),
    \end{aligned}
\end{equation}
for $j = 1:N$. Then
\begin{equation}
    v_i^{(k)}(x_{j-1}) \leq v_i^{(k)}(x_j) \quad \text{and} \quad w_i^{(k)}(x_j) \leq w_i^{(k)}(x_{j-1}), \quad j = 1:N, \ i = 1, 2.
\end{equation}
\end{lemma}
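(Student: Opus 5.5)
The proof is a cell-by-cell computation with the explicit one-cell solutions. On each cell $T_j$ the iterate $v_1^{(k)}$ is the function $v$ of \eqref{eq:single_const_vw} with $(a,b)=T_j$, constant $C=F_{1,j}+C_1 b_{j,k-1}$ and initial value $g_a=v_1^{(k)}(x_{j-1})$. The same holds for $v_2^{(k)}$, with $C_2$ and $C=F_{2,j}+C_2 a_{j,k}$, and for $w_i^{(k)}$, using the function $w$ and the right endpoint value. So I will prove monotonicity for one generic cell and one generic function, then apply it to all $j=1:N$ and $i=1,2$.

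For $v_1^{(k)}$, evaluating \eqref{eq:single_const_vw} at $x_j$ (equivalently, rearranging the first line of \eqref{eq:ch_2_differences_vw}) gives
\begin{equation*}
v_1^{(k)}(x_j)-v_1^{(k)}(x_{j-1}) = \Bigl(\tfrac{F_{1,j}+C_1 b_{j,k-1}}{C_1}-v_1^{(k)}(x_{j-1})\Bigr)\bigl(1-\exp(-C_1h_j)\bigr).
\end{equation*}
Since $C_1>0$ and $h_j>0$, the factor $1-\exp(-C_1h_j)$ is positive. The first factor is nonnegative exactly by the first hypothesis divided by $C_1>0$. Hence $v_1^{(k)}(x_{j-1})\le v_1^{(k)}(x_j)$.

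The remaining three cases follow the same pattern:
\begin{itemize}
\item For $v_2^{(k)}$, use the second line of \eqref{eq:ch_2_differences_vw} together with the hypothesis $F_{2,j}+C_2a_{j,k}\ge C_2v_2^{(k)}(x_{j-1})$.
\item For $w_1^{(k)}$, the third line of \eqref{eq:ch_2_differences_vw} gives
\begin{equation*}
w_1^{(k)}(x_{j-1})-w_1^{(k)}(x_j)=\Bigl(\tfrac{F_{1,j}+C_1d_{j,k-1}}{C_1}-w_1^{(k)}(x_j)\Bigr)\bigl(1-\exp(-C_1h_j)\bigr)\ge0,
\end{equation*}
by the third hypothesis.
\item For $w_2^{(k)}$, argue in the same way, using the fourth hypothesis.
\end{itemize}
Each hypothesis is precisely the condition $C_1g_a\le C$ (or $C_1 g_b\le C$) from \S\ref{sec:single_const}, applied on the cell $T_j$.

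No step is a real obstacle. The one point needing care is that the hypotheses involve the iterate's own nodal values $v_i^{(k)}(x_{j-1})$, which depend on earlier cells. This is harmless, because on every cell the identity above holds for whatever initial value is given, so no induction on $j$ is needed. The sign of each difference depends only on the hypothesis for that same cell.

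If one also wants monotonicity inside each cell, note that $v-C/C_1$ is a nonpositive constant times a decreasing positive exponential. Hence $v$ is nondecreasing on $T_j$, and symmetrically $w$ is nonincreasing on $T_j$. This is slightly stronger than the nodal claim in the statement.
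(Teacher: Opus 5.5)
Your proof is correct and matches the paper's: both use the one-cell relations \eqref{eq:ch_2_differences_vw} together with each hypothesis divided by $C_i>0$ on the same cell. The paper bounds $v_i^{(k)}(x_j)$ from below by the convex combination $\exp(-C_ih_j)v_i^{(k)}(x_{j-1})+(1-\exp(-C_ih_j))v_i^{(k)}(x_{j-1})$, while you factor the difference as a nonnegative term times $1-\exp(-C_ih_j)$, which is the same computation.
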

\begin{proof}
We use the formulas in equation \eqref{eq:ch_2_differences_vw}. For the data emanating from the left, we have $v_1^{(k)}(x_0) = g_1(x_0)$, $v_2^{(k)}(x_0) = g_2(x_0)$, and
\begin{equation}
    \begin{split}
        v^{(k)}_1(x_j) &= \exp(-C_1h_j)v^{(k)}_1(x_{j-1}) + \frac{\left(F_{1,j}+C_1b_{j,k-1}\right)}{C_1}(1-\exp(-C_1h_j)), \\
        v^{(k)}_2(x_j) &= \exp(-C_2h_j)v^{(k)}_2(x_{j-1}) + \frac{\left(F_{2,j}+C_2a_{j,k}\right)}{C_2}(1-\exp(-C_2h_j)).
    \end{split}
\end{equation}
Applying the first two conditions{, divided by $C_1 > 0$ and $C_2 > 0$, respectively,} we then estimate
\begin{equation}
    \begin{split}
        v_1^{(k)}(x_j) &\geq \exp(-C_1h_j)v^{(k)}_1(x_{j-1}) + v_1^{(k)}(x_{j-1})(1-\exp(-C_1h_j)) = v_1^{(k)}(x_{j-1}), \\
        v_2^{(k)}(x_j) &\geq \exp(-C_2h_j)v^{(k)}_2(x_{j-1}) + v_2^{(k)}(x_{j-1})(1-\exp(-C_2h_j)) = v_2^{(k)}(x_{j-1}).
    \end{split}
\end{equation}
For the data emanating from the right, we have $w_1^{(k)}(x_N) = g_1(x_N)$, $w_2^{(k)}(x_N) = g_2(x_N)$, and
\begin{equation}
    \begin{split}
        w_1^{(k)}(x_{j-1}) &= \exp(-C_1h_j)w^{(k)}_1(x_{j}) + \frac{\left(F_{1,j}+C_1d_{j,k-1}\right)}{C_1}(1-\exp(-C_1h_j)), \\
        w^{(k)}_2(x_{j-1}) &= \exp(-C_2h_j)w^{(k)}_2(x_{j}) + \frac{\left(F_{2,j}+C_2c_{j,k}\right)}{C_2}(1-\exp(-C_2h_j)).
    \end{split}
\end{equation}
Applying the last two conditions{, divided by $C_1 > 0$ and $C_2 > 0$, respectively,} we then estimate
\begin{equation}
    \begin{split}
        w_1^{(k)}(x_{j-1}) &\geq \exp(-C_1h_j)w_1^{(k)}(x_{j}) + w_1^{(k)}(x_j)(1-\exp(-C_1h_j)) = w_1^{(k)}(x_j), \\
        w^{(k)}_2(x_{j-1}) &\geq \exp(-C_2h_j)w^{(k)}_2(x_{j}) + w_2^{(k)}(x_j)(1-\exp(-C_2h_j)) = w_2^{(k)}(x_j).
    \end{split}
\end{equation}
\end{proof}


\section{Continuity and feasibility of the solutions on a star graph}\label{chapter3:network}
In \S\ref{sec:junction_values}, we compute the values at the junction obtained from the boundary data on each branch. Using these values, we impose the continuity conditions of \S\ref{sec:prelim_network} and determine the boundary values for which a continuous solution exists. Since on each branch we use the scheme described in \S\ref{sec:ch_2_1D_system} with data given at the boundary vertex, the results of \S\ref{sec:analysis} apply branch by branch.

\subsection{Values at the junction of a star graph from the boundary data}\label{sec:junction_values}
On each branch of the network, we apply the discretization of \S\ref{sec:ch_2_1D_system}, starting from the boundary values at $\bm x_m(\ell_m)$, and we obtain the values of the solutions at the junction.

\label{sec:ch_3_network_system}We now consider the problem \eqref{eq:net_system}. On each branch, the data are given only at the boundary vertex $\bm x_m(\ell_m)$, and we use only the iteration from the boundary vertex to the junction, that is, the iterates $\widetilde{\bm w}_{i, m}^{(k)}$, $\bm c^{(k)}$, and $\bm d^{(k)}$.
We adapt the notation to the problem on the star graph and denote by $\bm \beta_i^{(m)}$, $\bm \phi_i^{(m)}$, $M_{i, \pm}^{(m)}$, $Z_{i, \pm}^{(m)}$, $\bm \omega_{i, \pm}^{(m)}$, and $\bm a_1^{(m)}, \ldots, \bm d_1^{(m)}$ the quantities $\bm \beta_i$ and $\bm \phi_i$ defined in \S\ref{sec:ch_2_specific_case} and in \eqref{eq:ch_2_M}--\eqref{eq:ch_2_a0b0c0d0}, with $C_i$, $\bm h$, $N$, and $\bm F_i$ replaced by $C_{i, m}$, $\bm h^{(m)}$, $N_m$, and $\bm F_i^{(m)}$, and with the boundary values $g_i(x_0)$ and $g_i(x_N)$ replaced by $g_{i, m}(0)$ and $g_{i, m}(\ell_m)$.
With this notation, the limits of the iteration from the boundary vertex, computed as in Algorithm \ref{alg:ch_2_sys-genC-nonhom-limit}, are
\begin{equation}\label{eq:ch_3_network_w1cw2d}
    \begin{split}
        \widetilde{\bm w}_{1, m}^{(\infty)} &= M_{1, +}^{(m)} \bm d^{(m, \infty)} + \bm \omega_{1, +}^{(m)}, \\
        \bm c^{(m, \infty)} &= \left( I - Z_{1, +}^{(m)} Z_{2, +}^{(m)} \right)^{-1} \left( Z_{1, +}^{(m)} \bm d_1^{(m)} + \bm c_1^{(m)} \right), \\
        \widetilde{\bm w}_{2, m}^{(\infty)} &= M_{2, +}^{(m)} \bm c^{(m, \infty)} + \bm \omega_{2, +}^{(m)}, \\
        \bm d^{(m, \infty)} &= \left( I - Z_{2, +}^{(m)} Z_{1, +}^{(m)} \right)^{-1} \left( Z_{2, +}^{(m)} \bm c_1^{(m)} + \bm d_1^{(m)} \right).
    \end{split}
\end{equation}
We thus have
\begin{equation}
    \begin{split}
        \widetilde{w}_{1, m}(0) &= \bm e_1^T M_{1, +}^{(m)} \bm d^{(m, \infty)} + \bm e_1^T \bm \omega_{1, +}^{(m)}, \\
        \widetilde{w}_{2, m}(0) &= \bm e_1^T M_{2, +}^{(m)} \bm c^{(m, \infty)} + \bm e_1^T \bm \omega_{2, +}^{(m)}.
    \end{split}
\end{equation}
Since we know
\begin{equation}
    \bm e_1^T \left( A_{i, +}^{(m)} \right)^{-1} \bm e_{N_m} = \exp(-C_{i, m} \sum_{j=1}^{N_m-1} h_j^{(m)}),
\end{equation}
we can rewrite $\bm e_1^T \bm \omega_{i, +}^{(m)}$ as
\begin{equation}
    \bm e_1^T \bm \omega_{i, +}^{(m)} = \frac{1}{C_{i, m}} \bm e_1^T M_{i, +}^{(m)} \bm F_i^{(m)} + g_{i, m}(\ell_m) \exp(-C_{i, m} \ell_m).
\end{equation}
Substituting this into $\widetilde{w}_{i, m}(0)$, we have
\begin{equation}
    \begin{split}
        \widetilde{w}_{1, m}(0) &= \bm e_1^T M_{1, +}^{(m)} \left( \bm d^{(m, \infty)} + \frac{\bm F_1^{(m)}}{C_{1, m}} \right) + g_{1, m}(\ell_m) \exp(-C_{1, m} \ell_m), \\
        \widetilde{w}_{2, m}(0) &= \bm e_1^T M_{2, +}^{(m)} \left( \bm c^{(m, \infty)} + \frac{\bm F_2^{(m)}}{C_{2, m}} \right) + g_{2, m}(\ell_m) \exp(-C_{2, m} \ell_m).
    \end{split}
\end{equation}
We notice that
\begin{equation}
    \begin{split}
        \bm c_1^{(m)} &= \frac{1}{C_{1, m}} Z_{1, +}^{(m)} \bm F_1^{(m)} + g_{1, m}(\ell_m) \bm \psi_{1, +}^{(m)}, \\
        \bm d_1^{(m)} &= \frac{1}{C_{2, m}} Z_{2, +}^{(m)} \bm F_2^{(m)} + g_{2, m}(\ell_m) \bm \psi_{2, +}^{(m)},
    \end{split}
\end{equation}
where
\begin{equation}
    \bm \psi_{i, +}^{(m)} = \left( I - \operatorname{diag}\left(\bm \beta_i^{(m)}\right) \right) \left( I + \exp(-C_{i, m} h_{N_m}^{(m)}) P_+ \left( A_{i, +}^{(m)} \right)^{-1} \right) \bm e_{N_m}.
\end{equation}
Then we can write the values of the solutions at the junction (the origin) as
\begin{equation}\label{eq:net_system_junction_values}
    \widetilde{w}_{i, m}(0) = \mathfrak{a}_{i, +}^{(m)} + g_{i, m}(\ell_m) \mathfrak{b}_{i, +}^{(m)} + g_{3-i, m}(\ell_m) \mathfrak{c}_{i, +}^{(m)},
\end{equation}
where, for $i = 1, 2$,
\begin{equation}
    \begin{split}
        \mathfrak{a}_{i, +}^{(m)} &= \bm e_1^T M_{i, +}^{(m)} \left( I - Z_{3-i, +}^{(m)} Z_{i, +}^{(m)} \right)^{-1} \left( \frac{1}{C_{i, m}} \bm F_i^{(m)} + \frac{1}{C_{3-i, m}} Z_{3-i, +}^{(m)} \bm F_{3-i}^{(m)} \right), \\
        \mathfrak{b}_{i, +}^{(m)} &= \left( \bm e_1^T M_{i, +}^{(m)} \left( I - Z_{3-i, +}^{(m)} Z_{i, +}^{(m)} \right)^{-1} Z_{3-i, +}^{(m)} \bm \psi_{i, +}^{(m)} + \exp(-C_{i, m} \ell_m) \right), \\
        \mathfrak{c}_{i, +}^{(m)} &= \bm e_1^T M_{i, +}^{(m)} \left( I - Z_{3-i, +}^{(m)} Z_{i, +}^{(m)} \right)^{-1} \bm \psi_{3-i, +}^{(m)}.
    \end{split}
\end{equation}
That is, for given boundary data, we can determine the values of the solution at the junction (the origin) explicitly if we only propagate from the boundaries to the junction (the origin).

\subsection{Continuity at the junction}\label{sec:junction_system}\label{subsec:ch_3_network_system_continuity}
We now consider the continuity of the solutions on each branch at the junction and we focus on determining the boundary values $g_{i, m}(\ell_m)$ so that the values of the solutions at the junction match. Clearly, such continuity conditions are satisfied if
\begin{equation}
    \widetilde{w}_{i, p}(0) = \widetilde{w}_{i, q}(0), \quad i = 1, 2, \quad p, q = 1:M.
\end{equation}
By \eqref{eq:net_system_junction_values}, this is equivalent to
\begin{equation}
    g_{i, p}(\ell_{p}) \mathfrak{b}_{i, +}^{(p)} - g_{i, q}(\ell_{q}) \mathfrak{b}_{i, +}^{(q)} + g_{3-i, p}(\ell_{p}) \mathfrak{c}_{i, +}^{(p)} - g_{3-i, q}(\ell_{q}) \mathfrak{c}_{i, +}^{(q)} = \mathfrak{a}_{i, +}^{(q)} - \mathfrak{a}_{i, +}^{(p)}.
\end{equation}
It is sufficient to take $q = p + 1$ and form a system of $2M-2$ equations in the $2M$ unknowns $g_{i, m}(\ell_m)$ for $i = 1, 2$ and $m = 1, \ldots, M$. We define the following matrices and vectors:
\begin{equation}
    \mathfrak{D}_+^{(m)} = \begin{bmatrix}
        \mathfrak{b}_{1,+}^{(m)} & \mathfrak{c}_{1,+}^{(m)} \\
        {\mathfrak{c}_{2,+}^{(m)}} & {\mathfrak{b}_{2,+}^{(m)}}
    \end{bmatrix}, \quad \mathfrak{g}_+^{(m)} = \begin{bmatrix}
        g_{1,m}(\ell_m) \\
        g_{2,m}(\ell_m)
    \end{bmatrix}, \quad \text{and} \quad \mathfrak{a}_+^{(m)} = \begin{bmatrix}
        \mathfrak{a}_{1,+}^{(m)} \\
        \mathfrak{a}_{2,+}^{(m)}
    \end{bmatrix}.
\end{equation}
We can write the $m^{\text{th}}$ pair of rows of the full system as
\begin{equation}
    \mathfrak{D}_+^{(m)} \mathfrak{g}_+^{(m)} - \mathfrak{D}_+^{(m+1)} \mathfrak{g}_+^{(m+1)} = \mathfrak{a}_+^{(m+1)} - \mathfrak{a}_+^{(m)},
\end{equation}
for $m = 1, \dots, M-1$. Summing both sides of these equations from $1$ to $m-1$ gives
\begin{equation}\label{eq:ch_3_mathfrakD_telescope}
    \mathfrak{D}_+^{(m)} \mathfrak{g}_+^{(m)} = \mathfrak{D}_+^{(1)} \mathfrak{g}_+^{(1)} + \mathfrak{a}_+^{(1)} - \mathfrak{a}_+^{(m)},
\end{equation}
for $m = 1, \dots, M$.

{
\begin{lemma}\label{lem:bc_positive}
For $i = 1, 2$ and $m = 1:M$, we have $\mathfrak{b}_{i, +}^{(m)} \geq \exp(-C_{i, m} \ell_m) > 0$ and $\mathfrak{c}_{i, +}^{(m)} > 0$.
\end{lemma}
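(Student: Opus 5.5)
The plan is to show that every matrix and vector appearing in the formulas for $\mathfrak{b}_{i,+}^{(m)}$ and $\mathfrak{c}_{i,+}^{(m)}$ is entrywise nonnegative. Then $\mathfrak{b}_{i,+}^{(m)}$ is $\exp(-C_{i,m}\ell_m)$ plus a nonnegative number. For $\mathfrak{c}_{i,+}^{(m)}$, we exhibit one strictly positive term.

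\textbf{Step 1: the matrices are nonnegative.} Since $C_{i,m}>0$, the explicit formula for $(A_{i,+}^{(m)})^{-1}$ in \S\ref{sec:discretization} shows that it is entrywise positive on and above the diagonal. Also $I-\operatorname{diag}(\exp(-C_{i,m}\bm h^{(m)}))$ has positive diagonal. Hence $M_{i,+}^{(m)}$ is nonnegative, and it has positive entries on and above the diagonal; in particular its first row is entrywise positive.

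By \eqref{eq:ch_2_diag-eig}, $\beta_{i,j}\in(0,1)$. Therefore $Z_{i,+}^{(m)}=\operatorname{diag}(\bm\beta_i)+(I-\operatorname{diag}(\bm\beta_i))P_+M_{i,+}$ is nonnegative with positive diagonal.

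Since $\rho(Z_{3-i,+}^{(m)}Z_{i,+}^{(m)})<1$, the Neumann series gives
\begin{equation*}
\left(I-Z_{3-i,+}^{(m)}Z_{i,+}^{(m)}\right)^{-1}=\sum_{k\ge0}\left(Z_{3-i,+}^{(m)}Z_{i,+}^{(m)}\right)^k .
\end{equation*}
This is nonnegative, and its diagonal is at least $1$ because of the identity term.

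\textbf{Step 2: $\bm\psi_{i,+}^{(m)}$ is nonnegative with a positive last entry.} The vector $(I+\exp(-C_{i,m}h_{N_m})P_+(A_{i,+}^{(m)})^{-1})\bm e_{N_m}$ is a sum of nonnegative vectors, and its $N_m$-th component equals $1$, since $P_+$ has zero last row. Multiplying by $I-\operatorname{diag}(\bm\beta_i)$, whose diagonal entries lie in $(0,1)$, preserves nonnegativity. It also leaves the $N_m$-th entry equal to $1-\beta_{i,N_m}>0$.

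\textbf{Step 3: conclude.} For $\mathfrak{c}_{i,+}^{(m)}=\bm e_1^TM_{i,+}^{(m)}(I-Z_{3-i,+}^{(m)}Z_{i,+}^{(m)})^{-1}\bm\psi_{3-i,+}^{(m)}$, we bound the product below by keeping only the identity term of the Neumann series and only the last component of $\bm\psi$:
\begin{equation*}
\mathfrak{c}_{i,+}^{(m)}\ge \left(M_{i,+}^{(m)}\right)_{1N_m}\left(1-\beta_{3-i,N_m}\right).
\end{equation*}
Both factors are positive by Steps 1 and 2, so $\mathfrak{c}_{i,+}^{(m)}>0$.

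For $\mathfrak{b}_{i,+}^{(m)}$, the first summand is a product of the nonnegative row vector $\bm e_1^TM_{i,+}^{(m)}$, nonnegative matrices, and the nonnegative vector $\bm\psi_{i,+}^{(m)}$, so it is $\ge0$. Hence $\mathfrak{b}_{i,+}^{(m)}\ge\exp(-C_{i,m}\ell_m)>0$.

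The main point requiring care is the positivity of the upper-right entry $(M_{i,+}^{(m)})_{1N_m}$, which needs the explicit inverse formula: it equals $\exp(-C_{i,m}\sum_{s=1}^{N_m-1}h_s)(1-\exp(-C_{i,m}h_{N_m}))>0$. The rest is bookkeeping of nonnegativity.
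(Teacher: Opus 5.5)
Your proposal is correct and matches the paper's proof essentially step for step. Both arguments use entrywise nonnegativity of $M_{i,+}^{(m)}$, $Z_{i,+}^{(m)}$, and $\bm\psi_{i,+}^{(m)}$, with the last entry of $\bm\psi_{i,+}^{(m)}$ equal to $1-\beta_{i,N_m}^{(m)}>0$ because $\bm e_{N_m}^T P_+ = \bm 0^T$. They then use the Neumann-series bound $\bigl(I - Z_{3-i,+}^{(m)} Z_{i,+}^{(m)}\bigr)^{-1} \geq I$ to get $\mathfrak{c}_{i,+}^{(m)} \geq \bigl(M_{i,+}^{(m)}\bigr)_{1N_m}\bigl(1-\beta_{3-i,N_m}^{(m)}\bigr) > 0$, and drop the nonnegative first summand of $\mathfrak{b}_{i,+}^{(m)}$ to get $\mathfrak{b}_{i,+}^{(m)} \geq \exp(-C_{i,m}\ell_m)$.
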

\begin{proof}
Fix $m$ and $i$, and write $k = 3 - i$. All inequalities between matrices and vectors below are entrywise. By the formula for the entries of $A_{i, +}^{-1}$ in \S\ref{sec:discretization}, the entries of $\left( A_{i, +}^{(m)} \right)^{-1}$ on and above the diagonal are positive and all other entries are zero. Since $C_{i, m} > 0$, the diagonal matrix $I - \operatorname{diag}(\exp(-C_{i, m} \bm h^{(m)}))$ has positive diagonal, so $M_{i, +}^{(m)} \geq 0$ by \eqref{eq:ch_2_M}, and
\begin{equation*}
    \left( M_{i, +}^{(m)} \right)_{1 N_m} = \left( \left( A_{i, +}^{(m)} \right)^{-1} \right)_{1 N_m} \left( 1 - \exp(-C_{i, m} h_{N_m}^{(m)}) \right) > 0.
\end{equation*}
By \eqref{eq:ch_2_diag-eig}, $\beta_{i, j}^{(m)}, 1 - \beta_{i, j}^{(m)} \in (0, 1)$, and $P_+ \geq 0$, so $Z_{i, +}^{(m)} \geq 0$ by \eqref{eq:ch_2_Z_i_pm}, and the same holds for $Z_{k, +}^{(m)}$. By the definition of $\bm \psi_{k, +}^{(m)}$, we have $\bm \psi_{k, +}^{(m)} \geq \bm 0$, and since $\bm e_{N_m}^T P_+ = \bm 0^T$, the last component of $\bm \psi_{k, +}^{(m)}$ is $1 - \beta_{k, N_m}^{(m)} > 0$. The same holds for $\bm \psi_{i, +}^{(m)}$. Since $\rho\left( Z_{k, +}^{(m)} Z_{i, +}^{(m)} \right) < 1$ by \eqref{eq:ch_2_diag-eig}, and $Z_{k, +}^{(m)}$ and $Z_{i, +}^{(m)}$ are nonnegative matrices, the Neumann series gives
\begin{equation*}
    \left( I - Z_{k, +}^{(m)} Z_{i, +}^{(m)} \right)^{-1} = I + \sum_{n = 1}^{\infty} \left( Z_{k, +}^{(m)} Z_{i, +}^{(m)} \right)^n \geq I.
\end{equation*}
In the formula for $\mathfrak{b}_{i, +}^{(m)}$, the first term is a product of nonnegative factors, so $\mathfrak{b}_{i, +}^{(m)} \geq \exp(-C_{i, m} \ell_m) > 0$. Since $\bm e_1^T M_{i, +}^{(m)} \geq \bm 0^T$ and $\bm \psi_{k, +}^{(m)} \geq \bm 0$, we obtain
\begin{equation*}
    \mathfrak{c}_{i, +}^{(m)} = \bm e_1^T M_{i, +}^{(m)} \left( I - Z_{k, +}^{(m)} Z_{i, +}^{(m)} \right)^{-1} \bm \psi_{k, +}^{(m)} \geq \bm e_1^T M_{i, +}^{(m)} \bm \psi_{k, +}^{(m)} \geq \left( M_{i, +}^{(m)} \right)_{1 N_m} \left( 1 - \beta_{k, N_m}^{(m)} \right) > 0.
\end{equation*}
\end{proof}
}

\begin{theorem}\label{thrm:app_C_mathfrakD_matrix_rank_one}
{For $i = 1, 2$, we have $\mathfrak{b}_{i, +}^{(m)} + \mathfrak{c}_{i, +}^{(m)} = 1$. Consequently, $\det \left( \mathfrak{D}_+^{(m)} \right) = \mathfrak{b}_{1, +}^{(m)} + \mathfrak{b}_{2, +}^{(m)} - 1$, and $\mathfrak{D}_+^{(m)}$ has rank one if and only if $\mathfrak{b}_{1, +}^{(m)} + \mathfrak{b}_{2, +}^{(m)} = 1$, in which case its two rows are equal.}
\end{theorem}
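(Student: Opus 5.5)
The cleanest route is a structural argument: the scheme is affine in all data, and constants are reproduced exactly. By \eqref{eq:net_system_junction_values}, $\widetilde{w}_{i,m}(0) - \mathfrak{a}_{i,+}^{(m)}$ is the value at the junction of the limit of the iteration on $J_m$ with $\bm F_1^{(m)} = \bm F_2^{(m)} = \bm 0$, written as a function of the boundary values. Indeed, $\mathfrak{a}_{i,+}^{(m)}$ is linear in the $\bm F$'s, while $\mathfrak{b}$ and $\mathfrak{c}$ do not involve $\bm F$. So it suffices to show that, for $\bm F_i^{(m)}=\bm 0$ and boundary data $g_{1,m}(\ell_m) = g_{2,m}(\ell_m) = \gamma$, the limits satisfy $\widetilde{w}_{1,m}(0) = \widetilde{w}_{2,m}(0) = \gamma$. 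This yields $\gamma(\mathfrak{b}_{i,+}^{(m)} + \mathfrak{c}_{i,+}^{(m)}) = \gamma$, and taking $\gamma = 1$ gives the claim.

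To verify this, I would check directly that the constant state is a fixed point of the iteration. Take $\bm c = \bm d = \gamma \mathbf 1$ and all nodal values $\gamma$. On each cell $T_j$, the single-equation problem with $C = F_{i,j} + C_i \gamma = C_i\gamma$ and endpoint value $\gamma$ has solution $w \equiv C/C_i + (\gamma - C/C_i)e^{\cdots} = \gamma$ by \eqref{eq:single_const_vw}. Its cell average is $\gamma$, and \eqref{eq:ch_2_abcd-comp-genC} gives $c_{j,k} = \beta_{1,j}\gamma + (1-\beta_{1,j})\gamma = \gamma$, and likewise for $d_{j,k}$. Similarly \eqref{eq:ch_2_differences_vw} gives nodal values $\gamma$.

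Since \eqref{eq:ch_2_diag-eig} shows that the iteration converges to a unique limit independent of initial guesses, the limit is this constant state. Hence $\widetilde{w}_{i,m}(0)=\gamma$. Algebraically, this amounts to the identities $Z_{i,+}\mathbf 1 = \mathbf 1$ and $M_{i,+}\mathbf 1 + \exp(-C_i\ell)\,\cdot$(last-column term) $= \mathbf 1$ at the first row. If a purely matrix proof is preferred, I would check these via $A_{i,+}^{-1}(I-\operatorname{diag}(e^{-C_i\bm h}))\mathbf 1 + e^{-C_i h_N}A_{i,+}^{-1}\bm e_N = \mathbf 1$. This follows because $A_{i,+}\mathbf 1 = \mathbf 1 - \operatorname{diag}(e^{-C_i\bm h})\mathbf 1 + e^{-C_ih_N}\bm e_N$. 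The main obstacle is bookkeeping here: the problem is ensuring that the $\bm\psi$ terms and the $\exp(-C_{i,m}\ell_m)$ term in $\mathfrak{b}$ exactly account for the boundary contribution. The fixed-point argument sidesteps this.

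For the consequences, substitute $\mathfrak{c}_{i,+}^{(m)} = 1-\mathfrak{b}_{i,+}^{(m)}$:
\[
\det \mathfrak{D}_+^{(m)} = \mathfrak{b}_{1}\mathfrak{b}_{2} - (1-\mathfrak{b}_{1})(1-\mathfrak{b}_{2}) = \mathfrak{b}_{1}+\mathfrak{b}_{2}-1,
\]
dropping the super- and subscripts. Since the entries are positive by Lemma~\ref{lem:bc_positive}, $\mathfrak{D}_+^{(m)} \ne 0$, so its rank is one exactly when the determinant vanishes. In that case $\mathfrak{c}_{2} = 1-\mathfrak{b}_2 = \mathfrak{b}_1$ and $\mathfrak{b}_2 = 1-\mathfrak{b}_1 = \mathfrak{c}_1$, so the rows $(\mathfrak{b}_1,\mathfrak{c}_1)$ and $(\mathfrak{c}_2,\mathfrak{b}_2)$ coincide.
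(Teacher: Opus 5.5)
Your proposal is correct and follows the paper's proof essentially step for step: reduce to $\bm F_1^{(m)}=\bm F_2^{(m)}=\bm 0$ with equal boundary values, observe that the constant state is a fixed point of the one-sided iteration and hence, by \eqref{eq:ch_2_diag-eig}, its unique limit, read off $\mathfrak{b}_{i,+}^{(m)}+\mathfrak{c}_{i,+}^{(m)}=1$, and then expand the determinant and use Lemma~\ref{lem:bc_positive} to rule out rank zero. The only slip is in your optional algebraic aside, which the main argument does not use: in general $Z_{i,+}\mathbf 1\neq\mathbf 1$ (its last entry is $\beta_{i,N}$), and the correct identity is $Z_{i,+}\mathbf 1+\bm\psi_{i,+}=\mathbf 1$, which gives $(I-Z_{3-i,+}Z_{i,+})^{-1}(Z_{3-i,+}\bm\psi_{i,+}+\bm\psi_{3-i,+})=\mathbf 1$, after which your identity for $M_{i,+}\mathbf 1$ completes the direct computation.
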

\begin{proof}
{By its definition, $\mathfrak{a}_{i, +}^{(m)}$ is linear in $\left( \bm F_1^{(m)}, \bm F_2^{(m)} \right)$, so it vanishes when $\bm F_1^{(m)} = \bm F_2^{(m)} = \bm 0$. Take $\bm F_1^{(m)} = \bm F_2^{(m)} = \bm 0$ and $g_{1, m}(\ell_m) = g_{2, m}(\ell_m) = g$. If all components of $\bm c^{(k-1)}$ and $\bm d^{(k-1)}$ equal $g$, then on every cell $T_j^{(m)}$ the right-hand sides in \eqref{eq:ch_2_differences_vw} and \eqref{eq:ch_2_abcd-comp-genC} give $\widetilde{w}_{i, m}^{(k)}(x_{j-1}) = \exp(-C_{i, m} h_j^{(m)}) g + g \left( 1 - \exp(-C_{i, m} h_j^{(m)}) \right) = g$ and $c_{j, k} = d_{j, k} = \beta g + (1 - \beta) g = g$, with $\beta = \beta_{1, j}^{(m)}$ and $\beta = \beta_{2, j}^{(m)}$, respectively. Hence the constant vectors are a fixed point of the iteration, and since the limits are independent of the initial values by \eqref{eq:ch_2_diag-eig}, $\widetilde{w}_{i, m}(0) = g$. By \eqref{eq:net_system_junction_values}, $g = g \left( \mathfrak{b}_{i, +}^{(m)} + \mathfrak{c}_{i, +}^{(m)} \right)$ for every $g$, which gives the first claim. Therefore
\begin{equation*}
    \mathfrak{D}_+^{(m)} = \begin{bmatrix}
        \mathfrak{b}_{1,+}^{(m)} & 1 - \mathfrak{b}_{1,+}^{(m)} \\
        1 - \mathfrak{b}_{2,+}^{(m)} & \mathfrak{b}_{2,+}^{(m)}
    \end{bmatrix}, \qquad
    \det \left( \mathfrak{D}_+^{(m)} \right) = \mathfrak{b}_{1, +}^{(m)} \mathfrak{b}_{2, +}^{(m)} - \left( 1 - \mathfrak{b}_{1, +}^{(m)} \right) \left( 1 - \mathfrak{b}_{2, +}^{(m)} \right) = \mathfrak{b}_{1, +}^{(m)} + \mathfrak{b}_{2, +}^{(m)} - 1.
\end{equation*}
If the determinant vanishes, then $1 - \mathfrak{b}_{2, +}^{(m)} = \mathfrak{b}_{1, +}^{(m)}$ and $\mathfrak{b}_{2, +}^{(m)} = 1 - \mathfrak{b}_{1, +}^{(m)}$, so the two rows are equal. Since $\mathfrak{b}_{i, +}^{(m)}, \mathfrak{c}_{i, +}^{(m)} > 0$ by Lemma \ref{lem:bc_positive}, the matrix $\mathfrak{D}_+^{(m)}$ cannot have rank zero, so it has rank one.}
\end{proof}

{
\begin{theorem}\label{thrm:det_D_product}
For $m = 1:M$,
\begin{equation}\label{eq:det_D_product}
    \det \left( \mathfrak{D}_+^{(m)} \right) = \prod_{j = 1}^{N_m} \frac{\left( 1 - \beta_{1, j}^{(m)} \right) \left( 1 - \beta_{2, j}^{(m)} \right)}{1 - \beta_{1, j}^{(m)} \beta_{2, j}^{(m)}} \left( \mathfrak{B}\left( C_{1, m} h_j^{(m)} \right) + \mathfrak{B}\left( C_{2, m} h_j^{(m)} \right) - 1 \right).
\end{equation}
In particular, $\mathfrak{D}_+^{(m)}$ is singular if and only if $\mathfrak{B}\left( C_{1, m} h_j^{(m)} \right) + \mathfrak{B}\left( C_{2, m} h_j^{(m)} \right) = 1$ for some $j$, and the sign of $\det \left( \mathfrak{D}_+^{(m)} \right)$ equals the sign of $\prod_{j = 1}^{N_m} \left( \mathfrak{B}\left( C_{1, m} h_j^{(m)} \right) + \mathfrak{B}\left( C_{2, m} h_j^{(m)} \right) - 1 \right)$.
\end{theorem}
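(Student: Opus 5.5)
The plan is to factor $\mathfrak{D}_+^{(m)}$ as a product of $2\times 2$ transfer matrices, one per cell, and then compute the determinant of each factor explicitly.

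\textbf{Step 1: localizing to a single cell.} Fix $m$ and drop the superscript. The limits in \eqref{eq:ch_3_network_w1cw2d} are the fixed point of the iteration \eqref{eq:ch_2_abcd-comp-genC}--\eqref{eq:ch_2_differences_vw} propagated from the right. Since all matrices involved are upper triangular, this fixed point is computed cell by cell, from $T_{N}$ down to $T_1$. On the cell $T_j$, the limits $c_j, d_j$ and the node values $w_i(x_{j-1})$ depend only on $(w_1(x_j), w_2(x_j))$ and on $F_{i,j}$. Concretely, they solve the $2\times2$ system
\[
c_j=\beta_{1,j}\bigl(F_{1,j}/C_1+d_j\bigr)+(1-\beta_{1,j})w_1(x_j),\qquad d_j=\beta_{2,j}\bigl(F_{2,j}/C_2+c_j\bigr)+(1-\beta_{2,j})w_2(x_j),
\]
and then $w_1(x_{j-1})=e_{1,j}w_1(x_j)+(1-e_{1,j})(F_{1,j}/C_1+d_j)$, and similarly for $w_2$ with $c_j$. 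Here $e_{i,j}=\exp(-C_ih_j)$.

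Hence $(w_1,w_2)(x_{j-1})=\bm\alpha_j+D_j\,(w_1,w_2)(x_j)$ for an explicit $2\times2$ matrix $D_j$. Composing the cells from $j=N$ down to $j=1$ gives $\mathfrak{D}_+=D_1D_2\cdots D_N$, because the linear part of a composition of affine maps is the product of the linear parts. It follows that $\det\mathfrak{D}_+=\prod_j\det D_j$. I would justify that this cellwise fixed point coincides with the limit in \eqref{eq:ch_3_network_w1cw2d} by the uniqueness of the limit, which is guaranteed by \eqref{eq:ch_2_diag-eig}.

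\textbf{Step 2: the determinant of one cell matrix.} Each $D_j$ has rows summing to one, by the same constant-fixed-point argument used in Theorem~\ref{thrm:app_C_mathfrakD_matrix_rank_one} applied to a single cell. Therefore $\det D_j=b_{1,j}+b_{2,j}-1$, where $b_{i,j}$ denotes the diagonal entries of $D_j$. Setting $F=0$ and solving the $2\times2$ system gives
\[
d_j=\frac{(1-\beta_2)g_2+\beta_2(1-\beta_1)g_1}{1-\beta_1\beta_2},
\]
where $g_i=w_i(x_j)$. This yields
\[
b_{1,j}=e_1+(1-e_1)\frac{\beta_2(1-\beta_1)}{1-\beta_1\beta_2},
\]
and symmetrically for $b_{2,j}$.

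\textbf{Step 3: rewriting in terms of $\mathfrak{B}$ (main obstacle).} This step is pure algebra. Write $t_i=C_ih_j$. From \eqref{eq:bernoulli} we have $1-\beta_i=1/\mathfrak{B}(-t_i)=(1-e_i)/t_i$. The identity $\mathfrak{B}(-t)=\mathfrak{B}(t)+t$ gives $\mathfrak{B}(t_i)=t_ie_i/(1-e_i)$, so $e_i=\mathfrak{B}(t_i)(1-\beta_i)$ and $1-e_i=t_i(1-\beta_i)$.

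Substituting these expressions into $b_{1,j}+b_{2,j}-1$, I would multiply through by $1-\beta_1\beta_2$. The expected outcome is $(1-\beta_1)(1-\beta_2)\bigl(\mathfrak{B}(t_1)+\mathfrak{B}(t_2)-1\bigr)$. Getting this combination to collapse may require also using $\beta_i=1-(1-e_i)/t_i$ to eliminate the $t_i$ terms. I expect this bookkeeping to be the only delicate part, and I would check it first in the symmetric case $t_1=t_2$.

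\textbf{Step 4: the consequences.} Once \eqref{eq:det_D_product} is established, both remaining claims follow from the prefactor. Since $\beta_{i,j}\in(0,1)$ by \eqref{eq:ch_2_diag-eig}, each prefactor $(1-\beta_1)(1-\beta_2)/(1-\beta_1\beta_2)$ is strictly positive. Therefore $\det\mathfrak{D}_+$ vanishes exactly when some factor $\mathfrak{B}(C_{1}h_j)+\mathfrak{B}(C_{2}h_j)-1$ is zero. For the same reason, the sign of $\det\mathfrak{D}_+$ equals the sign of the product of these factors.
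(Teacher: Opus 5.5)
Your proposal is correct and follows essentially the paper's route: the paper also solves the limiting cell relations for $(c_j,d_j)$ to get a per-cell transfer matrix $\mathcal{T}_j$ with $\mathfrak{D}_+^{(m)}=\mathcal{T}_1\cdots\mathcal{T}_{N_m}$, uses that its rows sum to one so that $\det\mathcal{T}_j$ is its trace minus one, and then rewrites the result with Bernoulli values (via $\mathfrak{B}(-t)=e^t\mathfrak{B}(t)$ instead of your $\mathfrak{B}(-t)=\mathfrak{B}(t)+t$, both giving $e_i=\mathfrak{B}(t_i)(1-\beta_i)$). Your Step 3 is easier than you expect: expanding gives $(1-\beta_1\beta_2)(b_{1,j}+b_{2,j}-1)=e_1(1-\beta_2)+e_2(1-\beta_1)-(1-\beta_1)(1-\beta_2)$, and substituting $e_i=\mathfrak{B}(t_i)(1-\beta_i)$ finishes the computation with no need to eliminate $t_i$.
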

\begin{proof}
Fix $m$ and omit the superscripts $(m)$ and $(m, \infty)$. Take $\bm F_1 = \bm F_2 = \bm 0$, so that $\mathfrak{a}_{i, +} = 0$ and, by \eqref{eq:net_system_junction_values},
\begin{equation*}
    \begin{bmatrix} \widetilde{w}_1(0) \\ \widetilde{w}_2(0) \end{bmatrix} = \mathfrak{D}_+ \begin{bmatrix} g_1(\ell) \\ g_2(\ell) \end{bmatrix}.
\end{equation*}
Fix a cell $T_j$, and let $\mathfrak{e}_i = \exp(-C_i h_j)$, $\beta_i = \beta_{i, j}$, $\gamma_i = 1 - \beta_{i, j}$, and $\delta_j = 1 - \beta_1 \beta_2 = \gamma_1 + \gamma_2 - \gamma_1 \gamma_2$. By \eqref{eq:ch_2_diag-eig}, $\beta_i, \gamma_i \in (0, 1)$, so $\delta_j > 0$. By \eqref{eq:ch_2_diag-eig}, the iterates converge, and letting $k \to \infty$ in the last two relations of \eqref{eq:ch_2_abcd-comp-genC} and of \eqref{eq:ch_2_differences_vw}, the limits satisfy, with $W_i = \widetilde{w}_i(x_j)$,
\begin{equation*}
    c_j = \beta_1 d_j + \gamma_1 W_1, \quad d_j = \beta_2 c_j + \gamma_2 W_2, \quad \widetilde{w}_1(x_{j-1}) = \mathfrak{e}_1 W_1 + (1 - \mathfrak{e}_1) d_j, \quad \widetilde{w}_2(x_{j-1}) = \mathfrak{e}_2 W_2 + (1 - \mathfrak{e}_2) c_j.
\end{equation*}
The first two equations form a linear system for $(c_j, d_j)$ with determinant $\delta_j > 0$, and
\begin{equation*}
    c_j = \frac{\gamma_1 W_1 + \beta_1 \gamma_2 W_2}{\delta_j}, \qquad d_j = \frac{\beta_2 \gamma_1 W_1 + \gamma_2 W_2}{\delta_j}.
\end{equation*}
Substituting these into the last two equations gives
\begin{equation*}
    \begin{bmatrix} \widetilde{w}_1(x_{j-1}) \\ \widetilde{w}_2(x_{j-1}) \end{bmatrix} = \mathcal{T}_j \begin{bmatrix} W_1 \\ W_2 \end{bmatrix}, \qquad
    \mathcal{T}_j = \begin{bmatrix}
        \mathfrak{e}_1 + (1 - \mathfrak{e}_1) \beta_2 \gamma_1 / \delta_j & (1 - \mathfrak{e}_1) \gamma_2 / \delta_j \\
        (1 - \mathfrak{e}_2) \gamma_1 / \delta_j & \mathfrak{e}_2 + (1 - \mathfrak{e}_2) \beta_1 \gamma_2 / \delta_j
    \end{bmatrix}.
\end{equation*}
Since $\left( \widetilde{w}_1(x_N), \widetilde{w}_2(x_N) \right) = \left( g_1(\ell), g_2(\ell) \right)$, applying this relation for $j = N, N-1, \ldots, 1$ gives, for all boundary values,
\begin{equation*}
    \mathfrak{D}_+ = \mathcal{T}_1 \mathcal{T}_2 \cdots \mathcal{T}_N, \qquad \det \left( \mathfrak{D}_+ \right) = \prod_{j = 1}^{N} \det \left( \mathcal{T}_j \right).
\end{equation*}
Each row of $\mathcal{T}_j$ sums to one, since $(1 - \mathfrak{e}_1)(\beta_2 \gamma_1 + \gamma_2) = (1 - \mathfrak{e}_1) \delta_j$ and $(1 - \mathfrak{e}_2)(\gamma_1 + \beta_1 \gamma_2) = (1 - \mathfrak{e}_2) \delta_j$. Hence, as in the proof of Theorem \ref{thrm:app_C_mathfrakD_matrix_rank_one}, $\det \left( \mathcal{T}_j \right) = \left( \mathcal{T}_j \right)_{11} + \left( \mathcal{T}_j \right)_{22} - 1$, and
\begin{equation*}
    \delta_j \det \left( \mathcal{T}_j \right) = (1 - \mathfrak{e}_1) \beta_2 \gamma_1 + (1 - \mathfrak{e}_2) \beta_1 \gamma_2 - (1 - \mathfrak{e}_1 - \mathfrak{e}_2) \delta_j = \mathfrak{e}_1 \gamma_2 + \mathfrak{e}_2 \gamma_1 - \gamma_1 \gamma_2,
\end{equation*}
where the last equality follows by expanding with $\beta_i = 1 - \gamma_i$ and $\delta_j = \gamma_1 + \gamma_2 - \gamma_1 \gamma_2$. From the definition \eqref{eq:bernoulli}, $\mathfrak{B}(-t) = e^t \mathfrak{B}(t)$, so $\mathfrak{B}(C_i h_j) = \mathfrak{e}_i \mathfrak{B}(-C_i h_j) = \mathfrak{e}_i / \gamma_i$. Therefore
\begin{equation*}
    \det \left( \mathcal{T}_j \right) = \frac{\gamma_1 \gamma_2}{\delta_j} \left( \mathfrak{B}(C_1 h_j) + \mathfrak{B}(C_2 h_j) - 1 \right),
\end{equation*}
and taking the product over $j$ gives \eqref{eq:det_D_product}. Since $\gamma_1 \gamma_2 / \delta_j = (1 - \beta_{1, j})(1 - \beta_{2, j}) / (1 - \beta_{1, j} \beta_{2, j}) > 0$ for every $j$, the prefactor in \eqref{eq:det_D_product} is positive, which gives the two remaining claims.
\end{proof}

\begin{remark}
The factorization $\mathfrak{D}_+^{(m)} = \mathcal{T}_1 \cdots \mathcal{T}_{N_m}$ in the proof of Theorem \ref{thrm:det_D_product} also gives the other properties of $\mathfrak{D}_+^{(m)}$ used above. Each $\mathcal{T}_j$ has positive entries and rows summing to one, and both properties are preserved under products. Hence $\mathfrak{b}_{i, +}^{(m)} + \mathfrak{c}_{i, +}^{(m)} = 1$ and $\mathfrak{b}_{i, +}^{(m)}, \mathfrak{c}_{i, +}^{(m)} > 0$, as in Theorem \ref{thrm:app_C_mathfrakD_matrix_rank_one} and Lemma \ref{lem:bc_positive}. By Theorem \ref{thrm:det_D_product}, whether $\mathfrak{D}_+^{(m)}$ is singular depends only on $C_{1, m}$, $C_{2, m}$, and $\bm h^{(m)}$: it is singular if and only if $\mathfrak{B}\left( C_{1, m} h_j^{(m)} \right) + \mathfrak{B}\left( C_{2, m} h_j^{(m)} \right) = 1$ for some $j$, which can be checked cell by cell without computing $\mathfrak{b}_{i, +}^{(m)}$.
\end{remark}
}

{
\begin{lemma}\label{lem:diff_row}
For $m = 1:M$, we have $\begin{bmatrix} 1 & -1 \end{bmatrix} \mathfrak{D}_+^{(m)} = \det \left( \mathfrak{D}_+^{(m)} \right) \begin{bmatrix} 1 & -1 \end{bmatrix}$. Consequently,
\begin{equation}\label{eq:diff_junction}
    \widetilde{w}_{1, m}(0) - \widetilde{w}_{2, m}(0) = \mathfrak{a}_{1, +}^{(m)} - \mathfrak{a}_{2, +}^{(m)} + \det \left( \mathfrak{D}_+^{(m)} \right) \left( g_{1, m}(\ell_m) - g_{2, m}(\ell_m) \right).
\end{equation}
\end{lemma}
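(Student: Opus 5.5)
The plan is to use the structure of $\mathfrak{D}_+^{(m)}$ obtained in the proof of Theorem~\ref{thrm:app_C_mathfrakD_matrix_rank_one}. By that theorem, $\mathfrak{c}_{i,+}^{(m)} = 1 - \mathfrak{b}_{i,+}^{(m)}$ for $i = 1, 2$. Hence
\begin{equation*}
    \mathfrak{D}_+^{(m)} = \begin{bmatrix}
        \mathfrak{b}_{1,+}^{(m)} & 1 - \mathfrak{b}_{1,+}^{(m)} \\
        1 - \mathfrak{b}_{2,+}^{(m)} & \mathfrak{b}_{2,+}^{(m)}
    \end{bmatrix}.
\end{equation*}
I will multiply this matrix on the left by the row vector $\begin{bmatrix} 1 & -1 \end{bmatrix}$, which subtracts the second row from the first. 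The first entry of the product is $\mathfrak{b}_{1,+}^{(m)} - 1 + \mathfrak{b}_{2,+}^{(m)}$. The second entry is $1 - \mathfrak{b}_{1,+}^{(m)} - \mathfrak{b}_{2,+}^{(m)}$. By Theorem~\ref{thrm:app_C_mathfrakD_matrix_rank_one}, these are $\det(\mathfrak{D}_+^{(m)})$ and $-\det(\mathfrak{D}_+^{(m)})$, which gives the first identity.

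Conceptually, this says that $[1,\,-1]$ is a left eigenvector of $\mathfrak{D}_+^{(m)}$. A $2\times 2$ row-stochastic matrix has right eigenvector $(1,1)^T$ with eigenvalue $1$. Its other eigenvalue is therefore the trace minus one, which equals the determinant. The left eigenvector for that eigenvalue is orthogonal to $(1,1)^T$, so it is proportional to $[1,\,-1]$. I would mention this only as a remark; the direct computation is enough.

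For the consequence, I take the two scalar relations \eqref{eq:net_system_junction_values} for $i = 1, 2$ and write them in vector form as $\bigl(\widetilde{w}_{1,m}(0), \widetilde{w}_{2,m}(0)\bigr)^T = \mathfrak{a}_+^{(m)} + \mathfrak{D}_+^{(m)} \mathfrak{g}_+^{(m)}$. Here I need to check that the row ordering of $\mathfrak{D}_+^{(m)}$ matches: the second row is $(\mathfrak{c}_{2,+}^{(m)}, \mathfrak{b}_{2,+}^{(m)})$, multiplying $(g_{1,m}(\ell_m), g_{2,m}(\ell_m))^T$. Applying $[1,\,-1]$ to both sides and using the first identity then gives \eqref{eq:diff_junction} directly.

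I see no real obstacle. The only point needing care is this bookkeeping of which of $\mathfrak{b}$ and $\mathfrak{c}$ multiplies which boundary value in the second row.
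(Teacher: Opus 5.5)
Your proposal is correct and follows essentially the same route as the paper's proof: you substitute $\mathfrak{c}_{i,+}^{(m)} = 1 - \mathfrak{b}_{i,+}^{(m)}$ from Theorem~\ref{thrm:app_C_mathfrakD_matrix_rank_one}, compute $\begin{bmatrix} 1 & -1 \end{bmatrix} \mathfrak{D}_+^{(m)}$ directly, and then apply this row vector to the vector form $\mathfrak{a}_+^{(m)} + \mathfrak{D}_+^{(m)} \mathfrak{g}_+^{(m)}$ of the junction values. Your eigenvector remark is a correct optional aside and is not needed for the proof.
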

\begin{proof}
By Theorem \ref{thrm:app_C_mathfrakD_matrix_rank_one}, $\mathfrak{c}_{i, +}^{(m)} = 1 - \mathfrak{b}_{i, +}^{(m)}$ for $i = 1, 2$, and therefore
\begin{equation*}
    \begin{bmatrix} 1 & -1 \end{bmatrix} \mathfrak{D}_+^{(m)} = \begin{bmatrix} \mathfrak{b}_{1, +}^{(m)} - 1 + \mathfrak{b}_{2, +}^{(m)} & 1 - \mathfrak{b}_{1, +}^{(m)} - \mathfrak{b}_{2, +}^{(m)} \end{bmatrix} = \left( \mathfrak{b}_{1, +}^{(m)} + \mathfrak{b}_{2, +}^{(m)} - 1 \right) \begin{bmatrix} 1 & -1 \end{bmatrix},
\end{equation*}
and $\mathfrak{b}_{1, +}^{(m)} + \mathfrak{b}_{2, +}^{(m)} - 1 = \det \left( \mathfrak{D}_+^{(m)} \right)$ by the same theorem. By \eqref{eq:net_system_junction_values}, the vector of the values $\widetilde{w}_{1, m}(0)$ and $\widetilde{w}_{2, m}(0)$ equals $\mathfrak{a}_+^{(m)} + \mathfrak{D}_+^{(m)} \mathfrak{g}_+^{(m)}$, and multiplying by $\begin{bmatrix} 1 & -1 \end{bmatrix}$ gives \eqref{eq:diff_junction}.
\end{proof}
}

{By Lemma \ref{lem:bc_positive}, $\mathfrak{D}_+^{(m)}$ is not rank zero, and we consider two cases: $\mathfrak{D}_+^{(m)}$ has rank one, and $\mathfrak{D}_+^{(m)}$ is nonsingular.}

Suppose $\mathfrak{D}_+^{(m)}$ has rank one. By Theorem \ref{thrm:app_C_mathfrakD_matrix_rank_one}, this occurs when {$\mathfrak{b}_{1, +}^{(m)} + \mathfrak{b}_{2, +}^{(m)} = 1$, and then the two rows are equal}, so {the first row of equation \eqref{eq:ch_3_mathfrakD_telescope} determines $\mathfrak{g}_+^{(m)}$, and the second row gives one additional condition, which we state below}. Since $\mathfrak{c}_{i, +}^{(m)} > 0$ {by Lemma \ref{lem:bc_positive}}, we are able to solve for $g_{2, m}(\ell_m)$ explicitly:
\begin{equation}
    g_{2, m}(\ell_m) = \frac{\mathfrak{b}_{1, +}^{(1)} g_{1, 1}(\ell_1) + \mathfrak{c}_{1, +}^{(1)} g_{2, 1}(\ell_1) - \mathfrak{b}_{1, +}^{(m)} g_{1, m}(\ell_m) + \mathfrak{a}_{1, +}^{(1)} - \mathfrak{a}_{1, +}^{(m)}}{\mathfrak{c}_{1, +}^{(m)}}.
\end{equation}
That is, if we know $g_{1, 1}(\ell_1)$, $g_{2, 1}(\ell_1)$, and $g_{1, m}(\ell_m)$, then we can determine $g_{2, m}(\ell_m)$.

{
The first row alone is not sufficient. Multiplying \eqref{eq:ch_3_mathfrakD_telescope} by $\begin{bmatrix} 1 & -1 \end{bmatrix}$ and applying Lemma \ref{lem:diff_row} to $\mathfrak{D}_+^{(m)}$ and $\mathfrak{D}_+^{(1)}$, we obtain, for every $m$,
\begin{equation}\label{eq:diff_telescope}
    \det \left( \mathfrak{D}_+^{(m)} \right) \left( g_{1, m}(\ell_m) - g_{2, m}(\ell_m) \right) = \det \left( \mathfrak{D}_+^{(1)} \right) \left( g_{1, 1}(\ell_1) - g_{2, 1}(\ell_1) \right) + \mathfrak{a}_{1, +}^{(1)} - \mathfrak{a}_{2, +}^{(1)} - \mathfrak{a}_{1, +}^{(m)} + \mathfrak{a}_{2, +}^{(m)}.
\end{equation}
The second row of \eqref{eq:ch_3_mathfrakD_telescope} is the first row minus \eqref{eq:diff_telescope}. When $\det \left( \mathfrak{D}_+^{(m)} \right) = 0$, the left-hand side of \eqref{eq:diff_telescope} vanishes, and, given the first row, the second row holds if and only if
\begin{equation}\label{eq:rank_one_compat}
    \det \left( \mathfrak{D}_+^{(1)} \right) \left( g_{1, 1}(\ell_1) - g_{2, 1}(\ell_1) \right) = \left( \mathfrak{a}_{1, +}^{(m)} - \mathfrak{a}_{2, +}^{(m)} \right) - \left( \mathfrak{a}_{1, +}^{(1)} - \mathfrak{a}_{2, +}^{(1)} \right).
\end{equation}
If $\mathfrak{D}_+^{(1)}$ is nonsingular, then \eqref{eq:rank_one_compat} determines $g_{1, 1}(\ell_1) - g_{2, 1}(\ell_1)$. If $\mathfrak{D}_+^{(1)}$ is also singular, then \eqref{eq:rank_one_compat} involves only the data $\mathfrak{a}_+^{(1)}$ and $\mathfrak{a}_+^{(m)}$, and either it holds for all boundary values or no boundary values give solutions that are continuous at the junction. In terms of the values at the junction, by \eqref{eq:diff_junction} a branch with singular $\mathfrak{D}_+^{(m)}$ gives $\widetilde{w}_{1, m}(0) - \widetilde{w}_{2, m}(0) = \mathfrak{a}_{1, +}^{(m)} - \mathfrak{a}_{2, +}^{(m)}$ for all boundary values on that branch, and continuity requires the same difference on every other branch.
}

In what follows, we disregard the case when $\mathfrak{D}_+^{(m)}$ is singular, that is, we assume that the matrices $\mathfrak{D}_+^{(m)}$ are nonsingular; by Theorem \ref{thrm:app_C_mathfrakD_matrix_rank_one}, this {means $\mathfrak{b}_{1, +}^{(m)} + \mathfrak{b}_{2, +}^{(m)} \neq 1$}. We define
\begin{equation}\label{eq:rev_T_xi}
    T^{(m)} = \left( \mathfrak{D}_+^{(m)} \right)^{-1} \mathfrak{D}_+^{(1)}
    \quad \text{and} \quad
    \bm \xi^{(m)} = \left( \mathfrak{D}_+^{(m)} \right)^{-1} \left( \mathfrak{a}_+^{(1)} - \mathfrak{a}_+^{(m)} \right),
\end{equation}
so that $T^{(1)} = I$ and $\bm \xi^{(1)} = \bm 0$.

\begin{lemma}\label{lem:rev_continuity}
The boundary values $\mathfrak{g}_+^{(m)}$, $m = 1:M$, give solutions that are continuous at the junction if and only if
\begin{equation}\label{eq:rev_gm}
    \mathfrak{g}_+^{(m)} = T^{(m)} \mathfrak{g}_+^{(1)} + \bm \xi^{(m)}, \quad m = 1:M.
\end{equation}
\end{lemma}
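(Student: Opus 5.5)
We reduce continuity at the junction to the telescoped system \eqref{eq:ch_3_mathfrakD_telescope} and then invert each $\mathfrak{D}_+^{(m)}$, which is possible under the standing assumption that all these matrices are nonsingular. First, by \eqref{eq:net_system_junction_values} in vector form, the pair of junction values on branch $J_m$ is $\mathfrak{a}_+^{(m)} + \mathfrak{D}_+^{(m)} \mathfrak{g}_+^{(m)}$. Continuity at the junction means $\widetilde{w}_{i,p}(0) = \widetilde{w}_{i,q}(0)$ for all $i$, $p$, $q$. This is equivalent to requiring that every branch agree with branch $J_1$:
\begin{equation*}
    \mathfrak{a}_+^{(m)} + \mathfrak{D}_+^{(m)} \mathfrak{g}_+^{(m)} = \mathfrak{a}_+^{(1)} + \mathfrak{D}_+^{(1)} \mathfrak{g}_+^{(1)}, \quad m = 1:M.
\end{equation*}
One direction of this equivalence is the specialization $p = 1$. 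The other holds because equality with a common vector is transitive. The displayed system is exactly \eqref{eq:ch_3_mathfrakD_telescope}. Stating it directly in this form avoids having to justify the telescoping argument from the consecutive equations $q = p+1$ in both directions.

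Next, we multiply by $\left( \mathfrak{D}_+^{(m)} \right)^{-1}$, which exists by assumption. Using the definitions \eqref{eq:rev_T_xi}, the equation for branch $m$ becomes
\begin{equation*}
    \mathfrak{g}_+^{(m)} = \left( \mathfrak{D}_+^{(m)} \right)^{-1} \mathfrak{D}_+^{(1)} \mathfrak{g}_+^{(1)} + \left( \mathfrak{D}_+^{(m)} \right)^{-1} \left( \mathfrak{a}_+^{(1)} - \mathfrak{a}_+^{(m)} \right) = T^{(m)} \mathfrak{g}_+^{(1)} + \bm \xi^{(m)}.
\end{equation*}
Multiplying back by $\mathfrak{D}_+^{(m)}$ recovers the original equation, so the two forms are equivalent for each $m$. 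For $m = 1$ both forms are trivially satisfied, since $T^{(1)} = I$ and $\bm \xi^{(1)} = \bm 0$. Together these give \eqref{eq:rev_gm} as an if-and-only-if statement.

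There is no real obstacle here; the argument is linear algebra. The only points that need care are two. First, the equivalence between pairwise continuity and continuity relative to branch $J_1$ must be stated explicitly. Second, nonsingularity of every $\mathfrak{D}_+^{(m)}$ is used. The lemma presupposes it, since $T^{(m)}$ and $\bm \xi^{(m)}$ are defined only under it, and by Theorem~\ref{thrm:det_D_product} it can be checked cell by cell.
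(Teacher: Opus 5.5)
Your proof is correct and follows the paper's argument, which simply multiplies the telescoped relation \eqref{eq:ch_3_mathfrakD_telescope} by $\left( \mathfrak{D}_+^{(m)} \right)^{-1}$. Your only addition is to derive that relation directly by comparing each branch with $J_1$, which makes explicit that it is equivalent to pairwise continuity in both directions.
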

\begin{proof}
Multiply \eqref{eq:ch_3_mathfrakD_telescope} by $\left( \mathfrak{D}_+^{(m)} \right)^{-1}$.
\end{proof}

\subsection{Feasibility on one branch}
There are additional criteria that the boundary conditions must satisfy. Taking the limit as $k \to \infty$ of the conditions in Lemma \ref{lem:ch_2_1D_2_eq_monotone_scheme}, and altering the notation to align with that for the network, we have the following:
\begin{equation}
    \begin{split}
        \bm F_1^{(m)} + C_{1, m} \bm d^{(m, \infty)} &\geq C_{1, m} \widetilde{\bm w}_{1, m}^{(\infty)}, \\
        \bm F_2^{(m)} + C_{2, m} \bm c^{(m, \infty)} &\geq C_{2, m} \widetilde{\bm w}_{2, m}^{(\infty)}.
    \end{split}
\end{equation}
Here the $j$th component of $\widetilde{\bm w}_{i, m}^{(\infty)}$ is the value at the left end $x_{j-1}$ of $T_j^{(m)}$, while Lemma \ref{lem:ch_2_1D_2_eq_monotone_scheme} uses the value at the right end $x_j$. The two sets of conditions are equivalent: with $K = F_{1, j}^{(m)} / C_{1, m} + d_j^{(m, \infty)}$, the relation \eqref{eq:ch_2_differences_vw} gives $K - \widetilde{w}_{1, m}(x_{j-1}) = \exp(-C_{1, m} h_j^{(m)}) \left( K - \widetilde{w}_{1, m}(x_j) \right)$, and similarly for $\widetilde{w}_{2, m}$.
Substituting equation \eqref{eq:ch_3_network_w1cw2d} into the above inequalities, we get
\begin{equation}
    \begin{split}
        \bm F_1^{(m)} + C_{1, m} \bm d^{(m, \infty)} &\geq C_{1, m} \left( M_{1, +}^{(m)} \bm d^{(m, \infty)} + \bm \omega_{1, +}^{(m)} \right), \\
        \bm F_2^{(m)} + C_{2, m} \bm c^{(m, \infty)} &\geq C_{2, m} \left( M_{2, +}^{(m)} \bm c^{(m, \infty)} + \bm \omega_{2, +}^{(m)} \right),
    \end{split}
\end{equation}
which{, after dividing by $C_{i, m} > 0$,} we can rewrite as
\begin{equation}
    \begin{split}
        \left( I - M_{1, +}^{(m)} \right) \frac{\bm F_1^{(m)}}{C_{1, m}} &\geq -\left( I - M_{1, +}^{(m)} \right) \bm d^{(m, \infty)} + g_{1, m}(\ell_m) \exp(-C_{1, m} h_{N_m}^{(m)}) \left( A_{1, +}^{(m)} \right)^{-1} \bm e_{N_m}, \\
        \left( I - M_{2, +}^{(m)} \right) \frac{\bm F_2^{(m)}}{C_{2, m}} &\geq -\left( I - M_{2, +}^{(m)} \right) \bm c^{(m, \infty)} + g_{2, m}(\ell_m) \exp(-C_{2, m} h_{N_m}^{(m)}) \left( A_{2, +}^{(m)} \right)^{-1} \bm e_{N_m}.
    \end{split}
\end{equation}
We further rewrite this as
\begin{equation}
    \begin{split}
        0 &\leq \mathfrak{q}_{1, j}^{(m)} + g_{1, m}(\ell_m) \mathfrak{r}_{1, j}^{(m)} + g_{2, m}(\ell_m) \mathfrak{s}_{1, j}^{(m)}, \\
        0 &\leq \mathfrak{q}_{2, j}^{(m)} + g_{1, m}(\ell_m) \mathfrak{s}_{2, j}^{(m)} + g_{2, m}(\ell_m) \mathfrak{r}_{2, j}^{(m)},
    \end{split}
\end{equation}
for $j = 1:N_m$, where
\begin{equation}
    \begin{split}
        \mathfrak{q}_{i, j}^{(m)} &= \bm e_j^T \left( I - M_{i, +}^{(m)} \right) \left( I - Z_{3-i, +}^{(m)} Z_{i, +}^{(m)} \right)^{-1} \left( \frac{1}{C_{i, m}} \bm F_i^{(m)} + \frac{1}{C_{3-i, m}} Z_{3-i, +}^{(m)} \bm F_{3-i}^{(m)} \right), \\
        \mathfrak{r}_{i, j}^{(m)} &= \bm e_j^T \left( I - M_{i, +}^{(m)} \right) \left( I - Z_{3-i, +}^{(m)} Z_{i, +}^{(m)} \right)^{-1} Z_{3-i, +}^{(m)} \bm \psi_{i, +}^{(m)} \\
        &\qquad\qquad - \exp(-C_{i, m} h_{N_m}^{(m)}) \, \bm e_j^T \left( A_{i, +}^{(m)} \right)^{-1} \bm e_{N_m}, \\
        \mathfrak{s}_{i, j}^{(m)} &= \bm e_j^T \left( I - M_{i, +}^{(m)} \right) \left( I - Z_{3-i, +}^{(m)} Z_{i, +}^{(m)} \right)^{-1} \bm \psi_{3-i, +}^{(m)}, \quad i = 1, 2.
    \end{split}
\end{equation}
The quantities $\mathfrak{q}_{i, j}^{(m)}$, $\mathfrak{r}_{i, j}^{(m)}$, and $\mathfrak{s}_{i, j}^{(m)}$ do not depend on the boundary values. Together with the conditions $g_{i, m}(\ell_m) \geq 0$, these are $2 N_m + 2$ linear inequalities for $\mathfrak{g}_+^{(m)}$. We denote by
\begin{equation}\label{eq:rev_Pm}
    P^{(m)} = \left\{ \mathfrak{g} = \begin{bmatrix} g_1 \\ g_2 \end{bmatrix} \in \mathbb{R}^2 : g_1, g_2 \geq 0, \ \
    \begin{aligned}
        & \mathfrak{q}_{1, j}^{(m)} + g_1 \mathfrak{r}_{1, j}^{(m)} + g_2 \mathfrak{s}_{1, j}^{(m)} \geq 0, \\
        & \mathfrak{q}_{2, j}^{(m)} + g_1 \mathfrak{s}_{2, j}^{(m)} + g_2 \mathfrak{r}_{2, j}^{(m)} \geq 0, \quad j = 1:N_m
    \end{aligned}
    \right\},
\end{equation}
the set of feasible boundary values on the branch $J_m$. As an intersection of half-planes, $P^{(m)}$ is a convex polygon, possibly empty or unbounded.

The coefficients of the second family of inequalities are proportional to those of the first.
\begin{lemma}\label{lem:proportional}
For $j = 1:N_m$ and $m = 1:M$, let
\begin{equation}\label{eq:lambda_j}
    \lambda_j^{(m)} = -\frac{C_{2, m} \left( \exp(C_{1, m} h_j^{(m)}) - 1 \right)}{C_{1, m} \left( \exp(C_{2,m} h_j^{(m)}) - 1 \right)} < 0.
\end{equation}
Then $\mathfrak{s}_{2, j}^{(m)} = \lambda_j^{(m)} \mathfrak{r}_{1, j}^{(m)}$ and $\mathfrak{r}_{2, j}^{(m)} = \lambda_j^{(m)} \mathfrak{s}_{1, j}^{(m)}$. In particular, $\Delta_j^{(m)} =\mathfrak{r}_{1, j}^{(m)} \mathfrak{r}_{2, j}^{(m)} - \mathfrak{s}_{1, j}^{(m)} \mathfrak{s}_{2, j}^{(m)} = 0$.
\end{lemma}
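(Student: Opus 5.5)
The plan is to avoid working directly with the global formulas for $\mathfrak{r}_{i,j}^{(m)}$ and $\mathfrak{s}_{i,j}^{(m)}$. Instead I would identify the linear functionals of $\mathfrak{g}_+^{(m)}$ that these coefficients represent and compute them cell by cell, exactly as in the proof of Theorem \ref{thrm:det_D_product}. Fix $m$ and $j$, drop superscripts, and use the notation of that proof: $\mathfrak{e}_i = \exp(-C_i h_j)$, $\gamma_i = 1-\beta_{i,j}$, $\delta_j = 1-\beta_1\beta_2$, and $W_i = \widetilde w_i(x_j)$.

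\textbf{Step 1: identify the functionals.} By construction, the $j$-th inequalities defining $P^{(m)}$ are the $j$-th components of
\begin{equation*}
E_1 = \frac{F_{1,j}}{C_1} + d_j - \widetilde w_1(x_{j-1}) \ge 0, \qquad E_2 = \frac{F_{2,j}}{C_2} + c_j - \widetilde w_2(x_{j-1}) \ge 0,
\end{equation*}
with the limits $c_j = c^{(\infty)}_j$ and $d_j = d^{(\infty)}_j$. This holds because the inequality with $(I - M_{i,+})$ was obtained by substituting the limit formulas and dividing by $C_i$. Each $E_i$ is affine in $\mathfrak{g}=(g_1(\ell),g_2(\ell))$. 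The pair $(\mathfrak{r}_{1,j},\mathfrak{s}_{1,j})$ is the coefficient vector of $E_1$, and $(\mathfrak{s}_{2,j},\mathfrak{r}_{2,j})$ is the coefficient vector of $E_2$. So it suffices to show that the homogeneous part of $E_2$ equals $\lambda_j$ times the homogeneous part of $E_1$. To get the homogeneous parts, I set $\bm F_1=\bm F_2=\bm 0$.

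\textbf{Step 2: local computation.} With zero data, the limit relations from the proof of Theorem \ref{thrm:det_D_product} give
\begin{equation*}
c_j = \frac{\gamma_1 W_1 + \beta_1\gamma_2 W_2}{\delta_j}, \qquad d_j = \frac{\beta_2\gamma_1 W_1 + \gamma_2 W_2}{\delta_j},
\end{equation*}
together with $\widetilde w_1(x_{j-1}) = \mathfrak{e}_1 W_1 + (1-\mathfrak{e}_1)d_j$. Hence $E_1 = \mathfrak{e}_1(d_j - W_1)$. Using $\delta_j - \beta_2\gamma_1 = \gamma_2$, this becomes $E_1 = \mathfrak{e}_1\gamma_2 (W_2-W_1)/\delta_j$. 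In the same way, $E_2 = \mathfrak{e}_2(c_j-W_2) = \mathfrak{e}_2\gamma_1(W_1-W_2)/\delta_j$. Therefore
\begin{equation*}
E_2 = -\frac{\mathfrak{e}_2\gamma_1}{\mathfrak{e}_1\gamma_2}\,E_1 .
\end{equation*}
Now substitute $\gamma_i = (1-\mathfrak{e}_i)/(C_i h_j)$, which is the identity $1/\mathfrak{B}(-C_ih_j)=1-\beta_{i,j}$. The factor becomes $-C_2(\mathfrak{e}_1^{-1}-1)/\bigl(C_1(\mathfrak{e}_2^{-1}-1)\bigr) = \lambda_j$, and it is negative since $C_i>0$.

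\textbf{Step 3: conclude.} The vector $(W_1,W_2)^T$ equals $\mathcal{T}_{j+1}\cdots\mathcal{T}_N\,\mathfrak{g}$, so it is a linear function of $\mathfrak{g}$. Hence the identity $E_2=\lambda_j E_1$ holds for all $\mathfrak{g}$, and comparing coefficients gives $\mathfrak{s}_{2,j}=\lambda_j\mathfrak{r}_{1,j}$ and $\mathfrak{r}_{2,j}=\lambda_j\mathfrak{s}_{1,j}$. The determinant claim then follows directly: $\Delta_j = \mathfrak{r}_{1,j}\lambda_j\mathfrak{s}_{1,j} - \mathfrak{s}_{1,j}\lambda_j\mathfrak{r}_{1,j} = 0$.

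\textbf{Main obstacle.} The step needing care is Step 1: checking that the $j$-th rows of the matrix expressions defining $\mathfrak{r}_{i,j}$ and $\mathfrak{s}_{i,j}$ are exactly the $g$-coefficients of $E_i$. Two points must be confirmed. First, the indexing: component $j$ of $\widetilde{\bm w}^{(\infty)}_{i}$ is the value at $x_{j-1}$. Second, the sign: no overall sign flip should have been introduced in the rearrangement. I would verify this by rereading the substitution of \eqref{eq:ch_3_network_w1cw2d}, using $\bm c_1,\bm d_1$ written via $\bm\psi_{i,+}$.
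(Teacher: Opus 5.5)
Your proof is correct. The identification you flag as the main obstacle in Step 1 is the same one the paper uses, and it holds without a sign change. Indeed, $\bm F_1/C_1 + \bm d - \widetilde{\bm w}_1 = (I - M_{1,+})(\bm F_1/C_1 + \bm d) - g_1 \exp(-C_1 h_N) A_{1,+}^{-1}\bm e_N$, and $\bm F_1/C_1 + \bm d = (I - Z_{2,+}Z_{1,+})^{-1}\bigl(\bm F_1/C_1 + Z_{2,+}\bm F_2/C_2 + g_1 Z_{2,+}\bm\psi_{1,+} + g_2\bm\psi_{2,+}\bigr)$. This gives exactly the stated $\mathfrak{q}_{1,j}$, $\mathfrak{r}_{1,j}$, $\mathfrak{s}_{1,j}$, and the case $i=2$ is the same.

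Your route differs from the paper's in how you extract the proportionality. The paper keeps general $\bm F$ and never solves the $2\times 2$ cell system. It sets $K_1 = F_{1,j}/C_1 + d_j$, $K_2 = F_{2,j}/C_2 + c_j$, $X = K_1 - W_1$, $Y = K_2 - W_2$, so that $E_1 = \mathfrak{e}_1 X$ and $E_2 = \mathfrak{e}_2 Y$. The limit relations $c_j = \beta_{1,j}K_1 + \gamma_1 W_1$ and $d_j = \beta_{2,j}K_2 + \gamma_2 W_2$ then give $Y = F_{2,j}/C_2 + W_1 - W_2 + \beta_{1,j}X$ and $X = F_{1,j}/C_1 + W_2 - W_1 + \beta_{2,j}Y$. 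Adding these cancels $W_1 - W_2$, so $(\gamma_1/\mathfrak{e}_1)E_1 + (\gamma_2/\mathfrak{e}_2)E_2 = F_{1,j}/C_1 + F_{2,j}/C_2$ is independent of $\mathfrak{g}$, and comparing coefficients gives the lemma. That route is slightly shorter. It also yields, for free, the relation $(\gamma_1/\mathfrak{e}_1)\mathfrak{q}_{1,j} + (\gamma_2/\mathfrak{e}_2)\mathfrak{q}_{2,j} = F_{1,j}/C_1 + F_{2,j}/C_2$, which the paper reuses for the last claim of Lemma~\ref{lem:rev_strip}.

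Your route sets $\bm F = \bm 0$ and writes $E_1$ and $E_2$ as explicit multiples of $W_1 - W_2$, which says more about the homogeneous parts. Since the rows of each $\mathcal{T}_k$ sum to one, $\begin{bmatrix} 1 & -1 \end{bmatrix}\mathcal{T}_k = \det(\mathcal{T}_k)\begin{bmatrix} 1 & -1 \end{bmatrix}$. Hence $W_1 - W_2 = \prod_{k=j+1}^{N}\det(\mathcal{T}_k)\,(g_1 - g_2)$, and your formula for $E_1$ gives $\mathfrak{r}_{1,j} = -\mathfrak{s}_{1,j} = -(\mathfrak{e}_1\gamma_2/\delta_j)\prod_{k=j+1}^{N}\det(\mathcal{T}_k)$. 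So the first part of Lemma~\ref{lem:rev_strip}, together with an explicit formula for $\mathfrak{r}_{1,j}$, comes out of the same computation.
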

\begin{proof}
Fix $m$ and $j$, and omit the superscripts $(m)$ and $(m, \infty)$. Let ${\mathfrak{e}_i} = \exp(-C_i h_j)$ and $\gamma_i = 1 - \beta_{i, j} = (1 - {\mathfrak{e}_i})/(C_i h_j)$ for $i = 1, 2$, and let $c_j$ and $d_j$ be the components of the limits $\bm c$ and $\bm d$. Set
\begin{equation*}
    K_1 = \frac{F_{1, j}}{C_1} + d_j, \qquad K_2 = \frac{F_{2, j}}{C_2} + c_j, \qquad X = K_1 - \widetilde{w}_1(x_j), \qquad Y = K_2 - \widetilde{w}_2(x_j).
\end{equation*}
By the derivation of \eqref{eq:rev_Pm}, the $j$th components of the two inequalities there are $K_1 - \widetilde{w}_1(x_{j-1}) \geq 0$ and $K_2 - \widetilde{w}_2(x_{j-1}) \geq 0$, and by the equivalence above,
\begin{equation*}
    \mathfrak{q}_{1, j} + g_1 \mathfrak{r}_{1, j} + g_2 \mathfrak{s}_{1, j} = {\mathfrak{e}_1} X, \qquad
    \mathfrak{q}_{2, j} + g_1 \mathfrak{s}_{2, j} + g_2 \mathfrak{r}_{2, j} = {\mathfrak{e}_2} Y,
\end{equation*}
for all boundary values $g_1, g_2$. In the limit, \eqref{eq:ch_2_abcd-comp-genC} gives $c_j = \beta_{1, j} K_1 + \gamma_1 \widetilde{w}_1(x_j)$ and $d_j = \beta_{2, j} K_2 + \gamma_2 \widetilde{w}_2(x_j)$. Hence
\begin{equation*}
    Y = \frac{F_{2, j}}{C_2} + \widetilde{w}_1(x_j) - \widetilde{w}_2(x_j) + \beta_{1, j} X, \qquad
    X = \frac{F_{1, j}}{C_1} + \widetilde{w}_2(x_j) - \widetilde{w}_1(x_j) + \beta_{2, j} Y,
\end{equation*}
and adding the two relations gives
\begin{equation*}
    \frac{\gamma_1}{{\mathfrak{e}_1}} \left( \mathfrak{q}_{1, j} + g_1 \mathfrak{r}_{1, j} + g_2 \mathfrak{s}_{1, j} \right) + \frac{\gamma_2}{{\mathfrak{e}_2}} \left( \mathfrak{q}_{2, j} + g_1 \mathfrak{s}_{2, j} + g_2 \mathfrak{r}_{2, j} \right) = \gamma_1 X + \gamma_2 Y = \frac{F_{1, j}}{C_1} + \frac{F_{2, j}}{C_2}.
\end{equation*}
The right-hand side does not depend on $g_1$ and $g_2$. Comparing the coefficients of $g_1$ and of $g_2$, we obtain $\mathfrak{s}_{2, j} = \lambda_j \mathfrak{r}_{1, j}$ and $\mathfrak{r}_{2, j} = \lambda_j \mathfrak{s}_{1, j}$ with
\begin{equation*}
    \lambda_j = -\frac{{\mathfrak{e}_2} \gamma_1}{{\mathfrak{e}_1} \gamma_2} = -\frac{C_2 \, {\mathfrak{e}_2} (1 - {\mathfrak{e}_1})}{C_1 \, {\mathfrak{e}_1} (1 - {\mathfrak{e}_2})} =-\frac{C_2 \left( \exp(C_1 h_j) - 1 \right)}{C_1 \left( \exp(C_2 h_j) - 1 \right)}.
\end{equation*}
{Since $C_1, C_2 > 0$,} all factors in the last fraction are positive, so $\lambda_j < 0$. Finally, $\Delta_j =\mathfrak{r}_{1, j} \lambda_j \mathfrak{s}_{1, j} - \mathfrak{s}_{1, j} \lambda_j \mathfrak{r}_{1, j} = 0$.
\end{proof}
Define
\begin{equation}
    \mu_j^{(m)} = g_{1, m}(\ell_m) \mathfrak{r}_{1, j}^{(m)} + g_{2, m}(\ell_m) \mathfrak{s}_{1, j}^{(m)},
\end{equation}
so that, since $\lambda_j^{(m)} < 0$, the system of inequalities becomes
\begin{equation}    
    -\mathfrak{q}_{1, j}^{(m)} \leq \mu_j^{(m)} \leq \frac{\mathfrak{q}_{2, j}^{(m)}}{|\lambda_j^{(m)}|},
\end{equation}
for $j = 1:N_m$ and $m = 1:M$. Let
\begin{equation}
    S_j^{(m)} = \left\{ \begin{bmatrix}
        g_{1, m}(\ell_m) \\
        g_{2, m}(\ell_m)
    \end{bmatrix} : -\mathfrak{q}_{1, j}^{(m)} \leq \mu_j^{(m)} \leq \frac{\mathfrak{q}_{2, j}^{(m)}}{|\lambda_j^{(m)}|} \right\},
\end{equation}
denote the feasible set, which is the region in between two parallel lines. Intersecting over $j = 1:N_m$, the feasible set $\cap_{j = 1:N_m} S_j^{(m)}$ is a convex polygon. {By Lemma \ref{lem:rev_strip} below, all these lines are parallel to $(1, 1)^T$, and the intersection is a strip.}

{
\begin{lemma}\label{lem:rev_strip}
For $j = 1:N_m$ and $i = 1, 2$, we have $\mathfrak{r}_{i, j}^{(m)} + \mathfrak{s}_{i, j}^{(m)} = 0$, and hence $\mu_j^{(m)} = \mathfrak{r}_{1, j}^{(m)} \left( g_{1, m}(\ell_m) - g_{2, m}(\ell_m) \right)$ and
\begin{equation}\label{eq:rev_strip}
    P^{(m)} = \left\{ \mathfrak{g} \in \mathbb{R}^2 : g_1, g_2 \geq 0, \ -\mathfrak{q}_{1, j}^{(m)} \leq \mathfrak{r}_{1, j}^{(m)} (g_1 - g_2) \leq \frac{\mathfrak{q}_{2, j}^{(m)}}{|\lambda_j^{(m)}|}, \ j = 1:N_m \right\}.
\end{equation}
Moreover, $-\mathfrak{q}_{1, j}^{(m)} \leq \mathfrak{q}_{2, j}^{(m)} / |\lambda_j^{(m)}|$ for $j = 1:N_m$.
\end{lemma}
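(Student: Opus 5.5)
We prove the three claims of Lemma~\ref{lem:rev_strip} in order, reusing the constant-boundary-data argument from the proof of Theorem~\ref{thrm:app_C_mathfrakD_matrix_rank_one} and the representation from the proof of Lemma~\ref{lem:proportional}.

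\textbf{Step 1: $\mathfrak{r}_{i,j}^{(m)}+\mathfrak{s}_{i,j}^{(m)}=0$.} Fix $m$ and $j$, and use the identities established in the proof of Lemma~\ref{lem:proportional}:
\begin{equation*}
\mathfrak{q}_{1,j}+g_1\mathfrak{r}_{1,j}+g_2\mathfrak{s}_{1,j}=\mathfrak{e}_1 X,\qquad \mathfrak{q}_{2,j}+g_1\mathfrak{s}_{2,j}+g_2\mathfrak{r}_{2,j}=\mathfrak{e}_2 Y,
\end{equation*}
valid for all boundary values. Here $X=F_{1,j}/C_1+d_j-\widetilde w_1(x_j)$ and $Y=F_{2,j}/C_2+c_j-\widetilde w_2(x_j)$. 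The coefficient of $g_1$ plus the coefficient of $g_2$ in $\mathfrak{e}_1X$ is the rate of change of $\mathfrak{e}_1X$ along the direction $g_1=g_2=g$. So it suffices to show that $X$ and $Y$ do not change when both boundary values are shifted by the same constant. By linearity, this difference equals the value of $X$ and $Y$ with $\bm F_1=\bm F_2=\bm 0$ and $g_1=g_2=g$.

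In that case the proof of Theorem~\ref{thrm:app_C_mathfrakD_matrix_rank_one} shows that all limits are constant: $c_j=d_j=\widetilde w_i(x_j)=g$. Hence $X=Y=0$ identically in $g$. Therefore the $g$-coefficient $\mathfrak{r}_{1,j}+\mathfrak{s}_{1,j}$ vanishes, and likewise $\mathfrak{s}_{2,j}+\mathfrak{r}_{2,j}=0$. As a consistency check, the case $i=2$ also follows from $i=1$ via Lemma~\ref{lem:proportional}: $\mathfrak{r}_{2,j}+\mathfrak{s}_{2,j}=\lambda_j(\mathfrak{s}_{1,j}+\mathfrak{r}_{1,j})$.

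\textbf{Step 2: the formula for $\mu_j^{(m)}$ and the description of $P^{(m)}$.} Since $\mathfrak{s}_{1,j}=-\mathfrak{r}_{1,j}$, we get $\mu_j=\mathfrak{r}_{1,j}(g_1-g_2)$. For the second family of inequalities, Lemma~\ref{lem:proportional} gives
\begin{equation*}
g_1\mathfrak{s}_{2,j}+g_2\mathfrak{r}_{2,j}=\lambda_j\mu_j=-|\lambda_j|\mu_j .
\end{equation*}
So that inequality is equivalent to $\mu_j\le \mathfrak{q}_{2,j}/|\lambda_j|$. Substituting both inequality families into \eqref{eq:rev_Pm} yields \eqref{eq:rev_strip}.

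\textbf{Step 3: $-\mathfrak{q}_{1,j}^{(m)}\le \mathfrak{q}_{2,j}^{(m)}/|\lambda_j^{(m)}|$.} This is the step I expect to be the main obstacle, because it is a sign statement rather than an identity. I would use the relation from the proof of Lemma~\ref{lem:proportional}, evaluated at $g_1=g_2=0$:
\begin{equation*}
\frac{\gamma_1}{\mathfrak{e}_1}\mathfrak{q}_{1,j}+\frac{\gamma_2}{\mathfrak{e}_2}\mathfrak{q}_{2,j}=\frac{F_{1,j}}{C_1}+\frac{F_{2,j}}{C_2}\ge 0,
\end{equation*}
where the sign uses $f_i\ge 0$. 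Dividing by $\gamma_1/\mathfrak{e}_1>0$ and using $|\lambda_j|=\mathfrak{e}_2\gamma_1/(\mathfrak{e}_1\gamma_2)$ gives $\mathfrak{q}_{1,j}+\mathfrak{q}_{2,j}/|\lambda_j|\ge 0$, which is the claim.

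The points to double-check are that this relation holds with $g_1=g_2=0$ (the $g$-terms are simply absent there) and that the ratio $\gamma_2\mathfrak{e}_1/(\mathfrak{e}_2\gamma_1)$ equals $1/|\lambda_j|$. Both are bookkeeping against Lemma~\ref{lem:proportional}.
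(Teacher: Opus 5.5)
Your proposal is correct and follows the paper's proof essentially step for step: setting $\bm F_1=\bm F_2=\bm 0$ with equal boundary values makes the limits constant, which gives $\mathfrak{r}_{i,j}^{(m)}+\mathfrak{s}_{i,j}^{(m)}=0$; Lemma~\ref{lem:proportional} with $\lambda_j^{(m)}<0$ then turns the second family of inequalities into the upper bound of the strip; and the $g$-independent part of $\gamma_1X+\gamma_2Y=F_{1,j}/C_1+F_{2,j}/C_2$ gives the last inequality. The differences are only cosmetic: you phrase Step 1 as invariance under a common shift of the boundary values, via superposition, and you obtain the case $i=2$ directly from $Y=0$, whereas the paper derives it from the case $i=1$ through $\lambda_j^{(m)}$.
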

\begin{proof}
Fix $m$ and $j$, and omit the superscripts $(m)$. Take $\bm F_1 = \bm F_2 = \bm 0$ and $g_1 = g_2 = g$. Then $\mathfrak{q}_{i, j} = 0$, since $\mathfrak{q}_{i, j}$ is linear in $(\bm F_1, \bm F_2)$. As in the proof of Theorem \ref{thrm:app_C_mathfrakD_matrix_rank_one}, the limits are the constant vectors with all components equal to $g$, so $K_1 = d_j = g$ and $\widetilde{w}_1(x_{j-1}) = g$ in the notation of the proof of Lemma \ref{lem:proportional}. By that proof, $g \left( \mathfrak{r}_{1, j} + \mathfrak{s}_{1, j} \right) = K_1 - \widetilde{w}_1(x_{j-1}) = 0$ for every $g$, so $\mathfrak{r}_{1, j} + \mathfrak{s}_{1, j} = 0$. By Lemma \ref{lem:proportional}, $\mathfrak{r}_{2, j} + \mathfrak{s}_{2, j} = \lambda_j \left( \mathfrak{s}_{1, j} + \mathfrak{r}_{1, j} \right) = 0$.

The quantities $\mathfrak{q}_{i, j}$, $\mathfrak{r}_{i, j}$, and $\mathfrak{s}_{i, j}$ do not depend on the boundary values, so we return to general $\bm F_1$, $\bm F_2$. By Lemma \ref{lem:proportional}, the second inequality in \eqref{eq:rev_Pm} is $\mathfrak{q}_{2, j} + \lambda_j \mu_j \geq 0$, and since $\lambda_j < 0$, the conditions of \eqref{eq:rev_Pm} for the index $j$ are $-\mathfrak{q}_{1, j} \leq \mathfrak{r}_{1, j} (g_1 - g_2) \leq \mathfrak{q}_{2, j} / |\lambda_j|$, which gives \eqref{eq:rev_strip}. For the last claim, comparing the terms independent of $g_1$ and $g_2$ in the proof of Lemma \ref{lem:proportional}, with $\mathfrak{e}_i$ and $\gamma_i$ as there,
\begin{equation*}
    \frac{\gamma_1}{\mathfrak{e}_1} \mathfrak{q}_{1, j} + \frac{\gamma_2}{\mathfrak{e}_2} \mathfrak{q}_{2, j} = \frac{F_{1, j}}{C_1} + \frac{F_{2, j}}{C_2}, \qquad \text{so} \qquad
    \mathfrak{q}_{1, j} + \frac{\mathfrak{q}_{2, j}}{|\lambda_j|} = \frac{\mathfrak{e}_1}{\gamma_1} \left( \frac{F_{1, j}}{C_1} + \frac{F_{2, j}}{C_2} \right) \geq 0,
\end{equation*}
where we used $|\lambda_j| = \mathfrak{e}_2 \gamma_1 / (\mathfrak{e}_1 \gamma_2)$ and $F_{i, j} \geq 0$.
\end{proof}
}

\subsection{Admissible boundary data on the star graph}\label{sec:junction_system_rev}
We define
\begin{equation}\label{eq:rev_G}
    G = \left\{ \mathfrak{g} \in \mathbb{R}^2 : T^{(m)} \mathfrak{g} + \bm \xi^{(m)} \in P^{(m)}, \ m = 1:M \right\}.
\end{equation}
Since $\mathfrak{g} \mapsto T^{(m)} \mathfrak{g} + \bm \xi^{(m)}$ is affine and $P^{(m)}$ is an intersection of half-planes, each condition in \eqref{eq:rev_G} is a set of $2 N_m + 2$ linear inequalities for $\mathfrak{g}$. Hence $G$ is a convex polygon, possibly empty or unbounded, defined by $\sum_{m = 1}^M (2 N_m + 2)$ linear inequalities for $\mathfrak{g}$. {By Lemma \ref{lem:rev_strip}, apart from the positivity conditions, these inequalities involve $T^{(m)} \mathfrak{g} + \bm \xi^{(m)}$ only through the difference of its two components. By Lemma \ref{lem:diff_row}, $\begin{bmatrix} 1 & -1 \end{bmatrix} \left( \mathfrak{D}_+^{(m)} \right)^{-1} = \det \left( \mathfrak{D}_+^{(m)} \right)^{-1} \begin{bmatrix} 1 & -1 \end{bmatrix}$, and therefore
\begin{equation*}
    \begin{bmatrix} 1 & -1 \end{bmatrix} \left( T^{(m)} \mathfrak{g} + \bm \xi^{(m)} \right) = \kappa^{(m)} (g_1 - g_2) + \begin{bmatrix} 1 & -1 \end{bmatrix} \bm \xi^{(m)}, \qquad \kappa^{(m)} = \frac{\det \left( \mathfrak{D}_+^{(1)} \right)}{\det \left( \mathfrak{D}_+^{(m)} \right)}.
\end{equation*}
Hence
\begin{equation}\label{eq:rev_G_strip}
    G = \left\{ \mathfrak{g} \in \mathbb{R}^2 : \
    \begin{aligned}
        & T^{(m)} \mathfrak{g} + \bm \xi^{(m)} \geq \bm 0, \\
        & -\mathfrak{q}_{1, j}^{(m)} \leq \mathfrak{r}_{1, j}^{(m)} \left( \kappa^{(m)} (g_1 - g_2) + \xi_1^{(m)} - \xi_2^{(m)} \right) \leq \frac{\mathfrak{q}_{2, j}^{(m)}}{|\lambda_j^{(m)}|}, \ j = 1:N_m, \ m = 1:M
    \end{aligned}
    \right\}.
\end{equation}
Each condition in the second line states that $g_1 - g_2$ lies in an interval, and the intersection of these intervals is an interval. Therefore $G$ is the intersection of a strip parallel to $(1, 1)^T$ with the $2M$ half-planes $T^{(m)} \mathfrak{g} + \bm \xi^{(m)} \geq \bm 0$.}

\begin{theorem}\label{thm:rev_admissible}
Suppose that $\mathfrak{D}_+^{(m)}$ is nonsingular for $m = 1:M$. The boundary values $\mathfrak{g}_+^{(m)}$, $m = 1:M$, give solutions that are continuous at the junction and satisfy the feasibility conditions on every branch if and only if $\mathfrak{g}_+^{(m)}$ is given by \eqref{eq:rev_gm} and $\mathfrak{g}_+^{(1)} \in G$.
\end{theorem}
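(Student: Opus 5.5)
The plan is to assemble the theorem from Lemma \ref{lem:rev_continuity} and the definitions of $P^{(m)}$ and $G$; it is essentially a bookkeeping argument. First we make precise what ``satisfy the feasibility conditions on every branch'' means: on branch $J_m$, the pair $\mathfrak{g}_+^{(m)}$ is nonnegative and satisfies the limiting monotonicity inequalities derived from Lemma \ref{lem:ch_2_1D_2_eq_monotone_scheme}. By the derivation preceding \eqref{eq:rev_Pm}, these are exactly the $2N_m$ inequalities with coefficients $\mathfrak{q}_{i,j}^{(m)}, \mathfrak{r}_{i,j}^{(m)}, \mathfrak{s}_{i,j}^{(m)}$. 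That derivation only divides by $C_{i,m}>0$ and uses the equivalence between left-end and right-end evaluation. So feasibility on $J_m$ is equivalent to $\mathfrak{g}_+^{(m)} \in P^{(m)}$. Since the iteration on each branch uses only the data at $\bm x_m(\ell_m)$, this condition involves only $\mathfrak{g}_+^{(m)}$.

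For the forward direction, suppose the boundary values give solutions continuous at the junction and feasible on every branch. Since every $\mathfrak{D}_+^{(m)}$ is nonsingular, Lemma \ref{lem:rev_continuity} gives $\mathfrak{g}_+^{(m)} = T^{(m)}\mathfrak{g}_+^{(1)} + \bm\xi^{(m)}$ for all $m$. Feasibility gives $\mathfrak{g}_+^{(m)} \in P^{(m)}$, so $T^{(m)}\mathfrak{g}_+^{(1)}+\bm\xi^{(m)} \in P^{(m)}$ for every $m$, which by \eqref{eq:rev_G} says $\mathfrak{g}_+^{(1)} \in G$.

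For the converse, take $\mathfrak{g}_+^{(1)}\in G$ and define $\mathfrak{g}_+^{(m)}$ by \eqref{eq:rev_gm}. Lemma \ref{lem:rev_continuity} gives continuity at the junction. Membership in $G$ gives $\mathfrak{g}_+^{(m)}\in P^{(m)}$, which is feasibility on $J_m$. For $m=1$ this uses $T^{(1)}=I$ and $\bm\xi^{(1)}=\bm 0$, so the condition reduces to $\mathfrak{g}_+^{(1)}\in P^{(1)}$.

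The only step needing care is identifying the feasibility conditions with $P^{(m)}$. The conditions of Lemma \ref{lem:ch_2_1D_2_eq_monotone_scheme} are stated for iterates, while $P^{(m)}$ is written for limits, and the two are linked by passing to the limit $k\to\infty$. We will take the limiting inequalities as the definition of feasibility, as the paper does, and note that the equivalence between the endpoint forms was already shown. Optionally, \eqref{eq:rev_G_strip} (via Lemma \ref{lem:rev_strip} and Lemma \ref{lem:diff_row}) then describes $G$ as a strip cut by half-planes, but that description is not needed for the equivalence itself.
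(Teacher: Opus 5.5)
Your proposal is correct and matches the paper's proof: continuity is equivalent to \eqref{eq:rev_gm} by Lemma~\ref{lem:rev_continuity}, feasibility on $J_m$ means $\mathfrak{g}_+^{(m)} \in P^{(m)}$, and substituting \eqref{eq:rev_gm} into these conditions is exactly the definition \eqref{eq:rev_G} of $G$. Your added remarks, that $T^{(1)} = I$ and $\bm\xi^{(1)} = \bm 0$ handle $m = 1$ and that feasibility is taken in the limiting sense, agree with how the paper sets things up.
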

\begin{proof}
By Lemma \ref{lem:rev_continuity}, continuity is equivalent to \eqref{eq:rev_gm}. Feasibility on the branch $J_m$ is the condition $\mathfrak{g}_+^{(m)} \in P^{(m)}$. Substituting \eqref{eq:rev_gm} gives the condition $\mathfrak{g}_+^{(1)} \in G$.
\end{proof}

Deciding whether $G$ is empty, and finding a point in it, is a linear feasibility problem in two variables, which can be solved by linear programming or by intersecting the half-planes directly. {By \eqref{eq:rev_G_strip}, it reduces to intersecting the $\sum_{m = 1}^M N_m$ intervals for $g_1 - g_2$ and the $2M$ half-planes $T^{(m)} \mathfrak{g} + \bm \xi^{(m)} \geq \bm 0$.} We summarize the computation in Algorithm \ref{alg:network_system_rev}.

\begin{algorithm}[H]
    \caption{Admissible boundary values for a system on a star graph}\label{alg:network_system_rev}
    \begin{algorithmic}
    \Function{SolveNetworkSystem}{}
    \For{$m = 1:M$}
        \State Compute $\mathfrak{a}_{i, +}^{(m)}$, $\mathfrak{b}_{i, +}^{(m)}$, $\mathfrak{c}_{i, +}^{(m)}$, $\mathfrak{D}_+^{(m)}$, and $\mathfrak{q}_{i, j}^{(m)}$, $\mathfrak{r}_{i, j}^{(m)}$, $\mathfrak{s}_{i, j}^{(m)}$ for $j = 1:N_m$;
        \State Compute $T^{(m)}$ and $\bm \xi^{(m)}$ by \eqref{eq:rev_T_xi};
        \State Form the $2 N_m + 2$ inequalities of $T^{(m)} \mathfrak{g} + \bm \xi^{(m)} \in P^{(m)}$ for $\mathfrak{g}$;
    \EndFor
    \State Find $\mathfrak{g}_+^{(1)} \in G$ by solving the linear feasibility problem \eqref{eq:rev_G}, or report that $G$ is empty;
    \State $\mathfrak{g}_+^{(m)} = T^{(m)} \mathfrak{g}_+^{(1)} + \bm \xi^{(m)}$ for $m = 2:M$;
    \State Compute $\widetilde{\bm w}_{i, m}^{(\infty)}$ for $m = 1:M$ by \eqref{eq:ch_3_network_w1cw2d};
    \EndFunction
    \end{algorithmic}
\end{algorithm}

\section{Numerical experiments}\label{sec:experiments}
We present three experiments: a manufactured solution of the system \eqref{eq:ch_2_1D_gen_system} on an interval, a star graph with three branches and a manufactured continuous solution, and a star graph with a singular branch. We examine the accuracy of the discrete solutions and illustrate the junction continuity, admissibility, and singularity results of \S\ref{chapter3:network}. 

\subsection{A manufactured solution on an interval}\label{sec:exp1}

On $(-1, 1)$ we take $C_1 = 0.5$, $C_2 = 0.8$, and
\begin{equation}\label{eq:exp1_u}
    u_1(x) = 1.5 - 0.8 |x - 0.3| - 0.3 (x - 0.3)^2, \qquad
    u_2(x) = 1.2 - 0.6 |x + 0.4| - 0.2 (x + 0.4)^2,
\end{equation}
with $f_i = |u_i'| + C_i (u_i - u_{3-i})$ and boundary values $g_i = u_i$ at $x = \pm 1$. Here $|u_1'(x)| = 0.8 + 0.6 |x - 0.3|$ and $|u_2'(x)| = 0.6 + 0.4 |x + 0.4|$ for $x \neq 0.3$ and $x \neq -0.4$, respectively, so $f_1$ and $f_2$ extend to continuous functions on $[-1, 1]$ and are positive, with $\min f_1 = 1.0165$ and $\min f_2 = 0.2256$.

The equations hold classically away from the concave kinks and in the viscosity sense at them, so \eqref{eq:exp1_u} provides a manufactured viscosity solution. Indeed, away from its kink $\xi_i$ ($\xi_1 = 0.3$, $\xi_2 = -0.4$), $u_i$ is smooth and satisfies the $i$th equation classically. At $\xi_i$ the one-sided derivatives are $\pm a_i$ with $a_1 = 0.8$ and $a_2 = 0.6$, so the kink is concave: the subdifferential of $u_i$ at $\xi_i$ is empty and the superdifferential is $[-a_i, a_i]$. For $|p| \leq a_i$ we have $|p| + C_i (u_i - u_{3-i})(\xi_i) - f_i(\xi_i) = |p| - a_i \leq 0$, so the subsolution condition holds at $\xi_i$, and the supersolution condition is void there. Hence $(u_1, u_2)$ is a viscosity solution of the system. Its uniqueness, and hence its identification with the limit of the scheme, relies on the comparison principle for weakly coupled systems with $C_1, C_2 > 0$, which we have not checked in this setting.

We use exact cell averages $F_{i, j}$ (three-point Gauss rule on each cell, split at the kinks) on uniform meshes and on random meshes with cell lengths $h_j$ proportional to independent samples of the uniform distribution on $[0.5, 1.5]$. Algorithm~\ref{alg:ch_2_sys_general} starts from the linear interpolants of the boundary values and stops when $\max_i \| \bm u_i^{(k)} - \bm u_i^{(k-1)} \|_\infty \leq 10^{-12}$. Errors are measured in the maximum norm over the nodes, that is, the error is the maximum over the nodes of $|u_i^{h}(x_j) - u_i(x_j)|$.

The stopping criterion is reached after $12$ iterations on every mesh. Table~\ref{tab:exp1} and Figure~\ref{fig:exp1} show second-order nodal accuracy, with fluctuations in the refinement rates as the kink positions vary relative to the nodes. On uniform meshes the observed rates alternate between about $1.9$ and $2.1$ for $u_1$ and between about $1.7$ and $2.3$ for $u_2$; over two refinements the rate is $2.0$. On random meshes the observed rates range from $1.0$ to $3.7$, and the average rates from $N = 16$ to $N = 2048$ are $1.9$ for $u_1$ and $2.1$ for $u_2$. Replacing the minimum $\min\{[g_i(x_0); \bm v_i], [\bm w_i; g_i(x_N)]\}$ by the fast marching method of Remark~\ref{rmk:fmm} gives the same iterates up to rounding (Table~\ref{tab:exp1_fmm}). These observations concern Algorithm~\ref{alg:ch_2_sys_general}, whose convergence we do not prove; the convergence result in \S\ref{sec:ch_2_specific_case} applies to the one-sided iterations.

\begin{table}[ht]
    \centering
    \begin{tabular}{rrrrrrr}
        \hline
        mesh & $N$ & iterations & error $u_1$ & rate & error $u_2$ & rate \\
        \hline
        uniform & 16 & 12 & 1.64e-03 & -- & 2.99e-03 & -- \\
         & 32 & 12 & 4.35e-04 & 1.92 & 5.94e-04 & 2.33 \\
         & 64 & 12 & 1.03e-04 & 2.08 & 1.88e-04 & 1.66 \\
         & 128 & 12 & 2.74e-05 & 1.91 & 3.71e-05 & 2.34 \\
         & 256 & 12 & 6.49e-06 & 2.08 & 1.18e-05 & 1.66 \\
         & 512 & 12 & 1.71e-06 & 1.92 & 2.32e-06 & 2.34 \\
         & 1024 & 12 & 4.06e-07 & 2.08 & 7.35e-07 & 1.66 \\
         & 2048 & 12 & 1.07e-07 & 1.92 & 1.45e-07 & 2.34 \\
        random & 16 & 12 & 1.40e-03 & -- & 3.34e-03 & -- \\
         & 32 & 12 & 7.18e-04 & 0.96 & 9.09e-04 & 1.88 \\
         & 64 & 12 & 1.63e-04 & 2.14 & 3.45e-04 & 1.40 \\
         & 128 & 12 & 2.14e-05 & 2.93 & 2.74e-05 & 3.66 \\
         & 256 & 12 & 1.02e-05 & 1.07 & 1.29e-05 & 1.09 \\
         & 512 & 12 & 2.55e-06 & 2.00 & 3.22e-06 & 2.00 \\
         & 1024 & 12 & 3.60e-07 & 2.82 & 6.53e-07 & 2.30 \\
         & 2048 & 12 & 1.29e-07 & 1.48 & 1.53e-07 & 2.10 \\
        \hline
    \end{tabular}
    \caption{Experiment 1: iterations of Algorithm~\ref{alg:ch_2_sys_general} and maximum nodal errors.}
    \label{tab:exp1}
\end{table}

\begin{table}[ht]
    \centering
    \begin{tabular}{rrrr}
        \hline
        $N$ & iterations, Algorithm~\ref{alg:ch_2_sys_general} & iterations, fast marching & max difference \\
        \hline
        16 & 12 & 12 & 2.2e-16 \\
        32 & 12 & 12 & 8.9e-16 \\
        64 & 12 & 12 & 7.8e-16 \\
        128 & 12 & 12 & 1.1e-15 \\
        256 & 12 & 12 & 2.6e-15 \\
        512 & 12 & 12 & 6.0e-15 \\
        \hline
    \end{tabular}
    \caption{Experiment 1: Algorithm~\ref{alg:ch_2_sys_general} with the minimum and with the fast marching method, random meshes.}
    \label{tab:exp1_fmm}
\end{table}

\begin{figure}[ht]
    \centering
    \includegraphics[width=\textwidth]{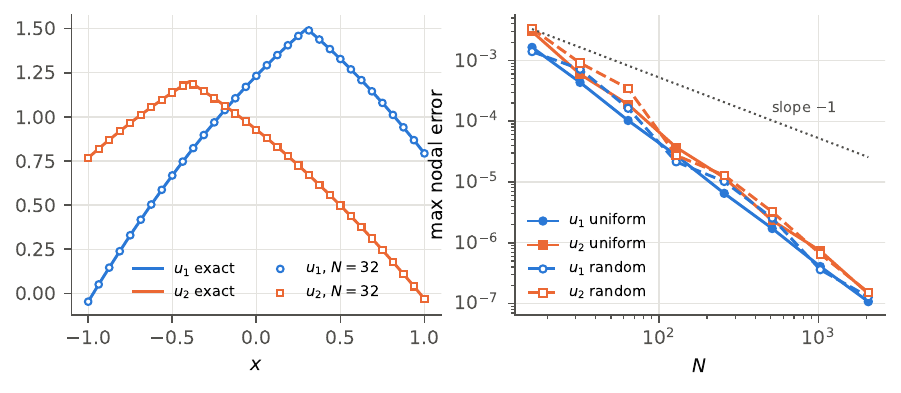}
    \caption{Experiment 1. Left: the manufactured solution and the discrete solution for $N = 32$. Right: maximum nodal errors.}
    \label{fig:exp1}
\end{figure}

\subsection{A star graph with three branches}\label{sec:exp2}

We take $M = 3$ branches of lengths $(1.0, 1.5, 0.7)$, coupling constants $(C_{1, m})_m = (0.6, 1.0, 1.5)$ and $(C_{2, m})_m = (0.9, 0.5, 1.2)$, and on each branch the manufactured solution
\begin{equation}\label{eq:exp2_u}
    \widetilde{u}_{i, m}(s) = U_i - \alpha_{i, m} s - \gamma_{i, m} s^2, \qquad s \in [0, \ell_m],
\end{equation}
where $U = (3.0, 2.7)$, $(\alpha_{1, m})_m = (0.8, 1.0, 1.2)$, $(\alpha_{2, m})_m = (0.9, 0.7, 1.1)$, $(\gamma_{1, m})_m = (0.2, 0.1, 0.3)$, and $(\gamma_{2, m})_m = (0.1, 0.3, 0.2)$. These functions are continuous at the junction, with the common values $U_1$ and $U_2$, and strictly decreasing from the junction toward the outer vertices. We set $f_{i, m} = |\widetilde{u}_{i, m}'| + C_{i, m} (\widetilde{u}_{i, m} - \widetilde{u}_{3-i, m})$, which is positive on every branch, and the exact boundary values $g_{i, m}(\ell_m) = \widetilde{u}_{i, m}(\ell_m)$, which are positive. Since $\widetilde{u}_{i, m}' < 0$, each $\widetilde{u}_{i, m}$ solves the linear equation $-\widetilde{u}_{i, m}' + C_{i, m} (\widetilde{u}_{i, m} - \widetilde{u}_{3-i, m}) = f_{i, m}$, which is what the iteration from the boundary vertex approximates.

Each branch $J_m$ has $n N_m^0$ uniform cells, with $N^0 = (8, 16, 8)$ and exact cell averages. For each $n$ we compute $\mathfrak{D}_+^{(m)}$, $\mathfrak{a}_+^{(m)}$, $T^{(m)}$, $\bm \xi^{(m)}$, and the coefficients $\mathfrak{q}$, $\mathfrak{r}$, $\mathfrak{s}$, as in Algorithm~\ref{alg:network_system_rev}. We fix the exact boundary pair $\mathfrak{g}_+^{(1)} = (2.0, 1.7)^T$ and determine the other boundary pairs by \eqref{eq:rev_gm}. Table~\ref{tab:exp2} reports the error $\max_m \left| T^{(m)} \mathfrak{g}_+^{(1)} + \bm \xi^{(m)} - \mathfrak{g}_{+, \mathrm{exact}}^{(m)} \right|$ of the boundary values on the other branches, the error $\left| \mathfrak{a}_+^{(1)} + \mathfrak{D}_+^{(1)} \mathfrak{g}_+^{(1)} - U \right|$ of the common junction value, and the maximum nodal error on all branches of the discrete solutions computed from the boundary values \eqref{eq:rev_gm}, all measured in the maximum norm over the relevant components and nodes. The junction jump is the largest componentwise difference between the junction values of the branches. Writing the inequalities that define $G$ as $\bm a_k \cdot \mathfrak{g} \leq b_k$, the signed distance of $\mathfrak{g}$ to $\partial G$ is
\[
    d(\mathfrak{g}, \partial G) = \min_k \frac{b_k - \bm a_k \cdot \mathfrak{g}}{\| \bm a_k \|_2},
    \qquad G = \{ \mathfrak{g} : \bm a_k \cdot \mathfrak{g} \leq b_k \text{ for all } k \},
\]
the distance to the nearest bounding line, positive inside $G$ and negative outside.

All three errors decrease at second order in $h$, while the junction jump remains at the level of rounding, as required by Lemma~\ref{lem:rev_continuity}. The exact boundary pair stays in the interior of $G$, about $0.44$ from its boundary, for every $n$. The interval for $g_{1, 1}(\ell_1) - g_{2, 1}(\ell_1)$ in \eqref{eq:rev_G_strip} stabilizes under refinement and contains the exact difference $0.3$ (Table~\ref{tab:exp2_interval}). Figures~\ref{fig:exp2_G} and \ref{fig:exp2_branches} show the admissible set and the branch profiles.

At $n = 2$, the interval for $g_{1, 1}(\ell_1) - g_{2, 1}(\ell_1)$ is $[-0.5202, 0.9322]$. The pair $(0.9174, 0.7114) \in G$ yields monotone profiles on every branch, whereas $(1.4984, 0.1305) \notin G$ violates monotonicity on branches $2$ and $3$, while the profiles on branch $1$ are monotone; the most violated inequality is the one with $j = 32$ in the first family of branch $2$, with violation $0.870$. Both choices give junction jumps of $1.3 \cdot 10^{-15}$, since the other boundary values are defined by \eqref{eq:rev_gm}. Thus continuity alone does not ensure admissibility, as expressed by Theorem~\ref{thm:rev_admissible}.

The unbounded direction of $G$ also agrees with the theory: $G + t (1, 1)^T \subset G$ for $t \geq 0$. Indeed, adding the same constant to both components does not change $|u_i'|$ or $u_1 - u_2$, so the system is invariant under this shift. In the discrete setting, $T^{(m)} (1, 1)^T = (1, 1)^T$ because the rows of $\mathfrak{D}_+^{(m)}$ sum to one, the conditions in Lemma~\ref{lem:rev_strip} depend only on the component difference $g_1 - g_2$, and the positivity conditions are preserved. Consequently, adding $t (1, 1)^T$, $t \geq 0$, preserves both monotonicity and nonnegative boundary values, and $G$, when it is not empty, is unbounded. For the pair inside $G$ above and $t \in \{1, 10, 100\}$, the smallest distance of $\mathfrak{g} + t (1, 1)^T$ to $\partial G$ is $0.509$.

\begin{table}[ht]
    \centering
    \resizebox{\textwidth}{!}{%
    \begin{tabular}{rrrrrrrrrr}
        \hline
        $n$ & cells & error $\mathfrak{g}^{(m)}$ & rate & error at junction & rate & error on branches & rate & jump & $d(\mathfrak{g}_+^{(1)}, \partial G)$ \\
        \hline
        1 & 32 & 7.27e-03 & -- & 3.08e-04 & -- & 7.27e-03 & -- & 1e-15 & +4.58e-01 \\
        2 & 64 & 1.82e-03 & 2.00 & 7.70e-05 & 2.00 & 1.82e-03 & 2.00 & 1e-15 & +4.47e-01 \\
        4 & 128 & 4.54e-04 & 2.00 & 1.92e-05 & 2.00 & 4.54e-04 & 2.00 & 4e-15 & +4.41e-01 \\
        8 & 256 & 1.13e-04 & 2.00 & 4.81e-06 & 2.00 & 1.13e-04 & 2.00 & 4e-15 & +4.38e-01 \\
        16 & 512 & 2.83e-05 & 2.00 & 1.20e-06 & 2.00 & 2.83e-05 & 2.00 & 8e-15 & +4.36e-01 \\
        32 & 1024 & 7.09e-06 & 2.00 & 3.01e-07 & 2.00 & 7.09e-06 & 2.00 & 1e-14 & +4.35e-01 \\
        64 & 2048 & 1.77e-06 & 2.00 & 7.52e-08 & 2.00 & 1.77e-06 & 2.00 & 2e-14 & +4.35e-01 \\
        \hline
    \end{tabular}}
    \caption{Experiment 2: errors of the boundary values, of the junction values, and of the solutions on the branches, the jump at the junction, and the distance of the exact boundary values to $\partial G$.}
    \label{tab:exp2}
\end{table}

\begin{table}[ht]
    \centering
    \begin{tabular}{rrrr}
        \hline
        $n$ & lower end & upper end & exact $g_{1,1} - g_{2,1}$ \\
        \hline
        1 & -0.5629 & 0.9478 & 0.3000 \\
        2 & -0.5202 & 0.9322 & 0.3000 \\
        4 & -0.4993 & 0.9235 & 0.3000 \\
        8 & -0.4889 & 0.9188 & 0.3000 \\
        16 & -0.4838 & 0.9165 & 0.3000 \\
        32 & -0.4812 & 0.9153 & 0.3000 \\
        64 & -0.4799 & 0.9147 & 0.3000 \\
        \hline
    \end{tabular}
    \caption{Experiment 2: the interval for $g_{1, 1}(\ell_1) - g_{2, 1}(\ell_1)$ defined by \eqref{eq:rev_G_strip}.}
    \label{tab:exp2_interval}
\end{table}

\begin{figure}[ht]
    \centering
    \includegraphics[width=0.55\textwidth]{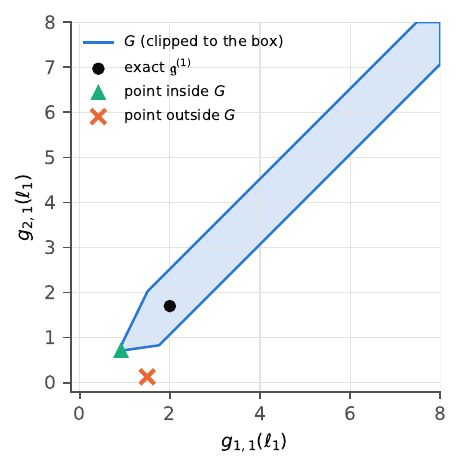}
    \caption{Experiment 2, $n = 2$: the admissible set $G$ (clipped to the plotted box; it continues in the direction $(1, 1)^T$), the exact boundary values on $J_1$, and the two test points.}
    \label{fig:exp2_G}
\end{figure}

\begin{figure}[ht]
    \centering
    \includegraphics[width=\textwidth]{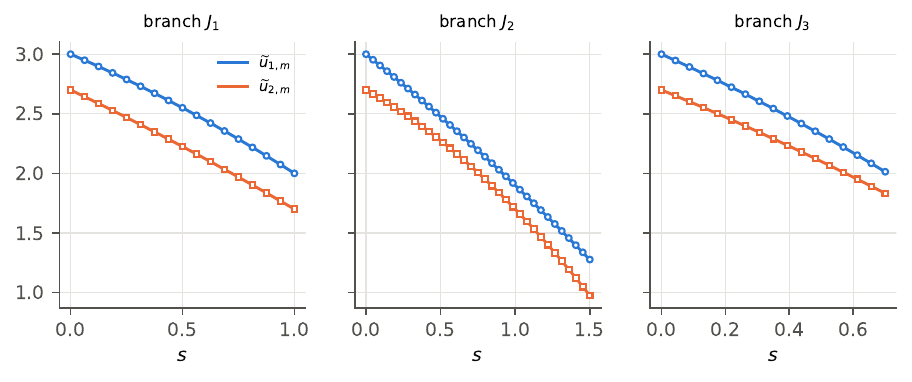}
    \caption{Experiment 2, $n = 2$: the manufactured solution (lines) and the discrete solution computed from the exact $\mathfrak{g}_+^{(1)}$ and \eqref{eq:rev_gm} (markers).}
    \label{fig:exp2_branches}
\end{figure}

\subsection{A star graph with a singular branch}\label{sec:exp3}

Let $t^* \approx 1.256431$ be the positive root of $\mathfrak{B}(t) = 1/2$. We keep branches $J_1$ and $J_3$ of \S\ref{sec:exp2} with $n = 1$, and replace $J_2$ by a branch with $C_{1, 2} = C_{2, 2} = 1$, three uniform cells of length $t^*$, and
\[
    f_{1, 2}(s) = 1 + 0.5 \sin s, \qquad f_{2, 2}(s) = 0.8 + 0.3 \cos 2s.
\]
By Theorem~\ref{thrm:det_D_product}, $\mathfrak{D}_+^{(2)}$ is singular; the computed determinant is $\det(\mathfrak{D}_+^{(2)}) = 2.2 \cdot 10^{-16}$. By \eqref{eq:diff_junction} in Lemma~\ref{lem:diff_row}, on a singular branch the difference at the junction is the same for all boundary values:
\[
    \widetilde{w}_{1, 2}(0) - \widetilde{w}_{2, 2}(0)
    = \mathfrak{a}_{1, +}^{(2)} - \mathfrak{a}_{2, +}^{(2)} \approx 0.202339.
\]
Table~\ref{tab:exp3_pinned} confirms this identity for random boundary values; the junction values are computed by the iteration from the boundary vertex.

\begin{table}[ht]
    \centering
    \begin{tabular}{rrrrr}
        \hline
        $g_{1,2}$ & $g_{2,2}$ & $\widetilde w_{1,2}(0)$ & $\widetilde w_{2,2}(0)$ & difference \\
        \hline
        1.5355 & 2.8514 & 6.2106 & 6.0083 & 0.2023 \\
        0.4325 & 2.8459 & 5.6564 & 5.4541 & 0.2023 \\
        0.9355 & 1.2700 & 5.1199 & 4.9176 & 0.2023 \\
        2.4831 & 1.2276 & 5.8725 & 5.6702 & 0.2023 \\
        1.6488 & 0.0827 & 4.8829 & 4.6806 & 0.2023 \\
        \hline
    \end{tabular}
    \caption{Experiment 3: the junction values on the singular branch for random boundary values.}
    \label{tab:exp3_pinned}
\end{table}

Since $\det(\mathfrak{D}_+^{(1)}) \approx 0.222658$, the compatibility condition \eqref{eq:rank_one_compat} requires $g_{1, 1}(\ell_1) - g_{2, 1}(\ell_1) = \delta^* \approx -0.140920$. We set $g_{2, 1}(\ell_1) = 1$ and $g_{1, 1}(\ell_1) = 1 + \delta^* + \varepsilon$. On $J_2$, we choose $g_{1, 2}(\ell_2)$ freely and determine $g_{2, 2}(\ell_2)$ from the first row of \eqref{eq:ch_3_mathfrakD_telescope}, which is the formula for the rank-one case; on $J_3$, we use \eqref{eq:rev_gm}. Table~\ref{tab:exp3_condition} lists the jumps of the junction values over the three branches.

For $\varepsilon = 0$, the solutions are continuous at the junction for every choice of $g_{1, 2}(\ell_2)$: in the singular case there is a one-parameter family of continuous solutions for fixed data on $J_1$. For $\varepsilon \neq 0$, the first component remains continuous and the second has a jump of $|\det(\mathfrak{D}_+^{(1)})\, \varepsilon|$, as predicted by \eqref{eq:diff_telescope} and \eqref{eq:rank_one_compat}. Table~\ref{tab:exp3_condition} confirms both cases. The resulting values of $g_{2, 2}(\ell_2)$ are negative, so these boundary values are not feasible; this test verifies only the continuity conditions, without asserting admissibility.

\begin{table}[ht]
    \centering
    \begin{tabular}{rrrrrr}
        \hline
        $\varepsilon$ & $g_{1,2}$ & $g_{2,2}$ from the formula & jump in $u_1$ & jump in $u_2$ & $\det(\mathfrak{D}_+^{(1)})\,\varepsilon$ \\
        \hline
        +0.0 & 0.5 & -4.5416 & 3.3e-15 & 1.55e-15 & +0.00e+00 \\
        +0.0 & 1.5 & -5.5416 & 4.0e-15 & 1.55e-15 & +0.00e+00 \\
        +0.1 & 0.5 & -4.4038 & 3.1e-15 & 2.23e-02 & +2.23e-02 \\
        +0.1 & 1.5 & -5.4038 & 3.6e-15 & 2.23e-02 & +2.23e-02 \\
        -0.2 & 0.5 & -4.8172 & 4.0e-15 & 4.45e-02 & -4.45e-02 \\
        -0.2 & 1.5 & -5.8172 & 4.0e-15 & 4.45e-02 & -4.45e-02 \\
        \hline
    \end{tabular}
    \caption{Experiment 3: jumps of the junction values over the three branches for compatible data ($\varepsilon = 0$) and perturbed data. The boundary values on $J_1$ are $g_{2, 1}(\ell_1) = 1$ and $g_{1, 1}(\ell_1) = 1 + \delta^* + \varepsilon$, so $\varepsilon$ measures the violation of \eqref{eq:rank_one_compat}. The predicted jump in the second component is $|\det(\mathfrak{D}_+^{(1)})\, \varepsilon|$.}
    \label{tab:exp3_condition}
\end{table}

Figure~\ref{fig:exp3_det} illustrates the determinant formula \eqref{eq:det_D_product}: it shows $\det(\mathfrak{D}_+^{(m)})$ for equal coupling constants $C_{1, m} = C_{2, m}$ on uniform meshes with $N_m = 1, 2, 3$ cells, together with the cell factor $\mathfrak{B}(C_1 h) + \mathfrak{B}(C_2 h) - 1$ for three ratios $C_2 / C_1$. At $C h = t^*$, the determinant changes sign for odd $N_m$ and touches zero without changing sign for even $N_m$, in agreement with Theorem~\ref{thrm:det_D_product}. At nine sample points, the product formula agrees with $\mathfrak{b}_{1, +}^{(m)} + \mathfrak{b}_{2, +}^{(m)} - 1$ computed from the definitions to within $4.4 \cdot 10^{-16}$.

\begin{figure}[ht]
    \centering
    \includegraphics[width=\textwidth]{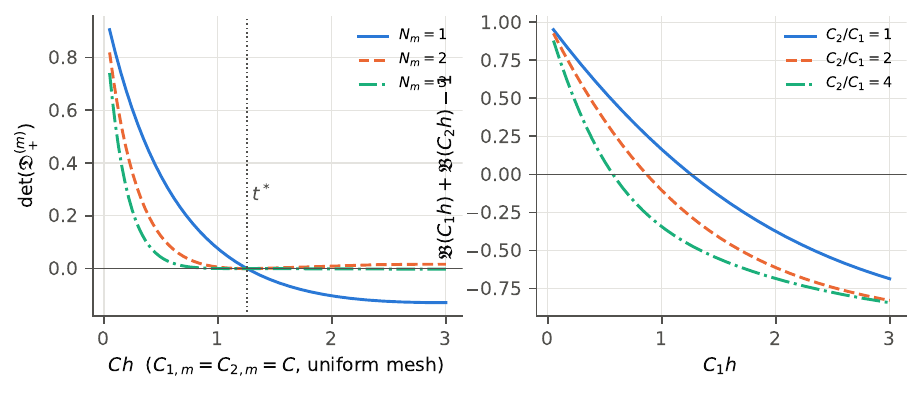}
    \caption{Experiment 3. Left: $\det(\mathfrak{D}_+^{(m)})$ for $C_{1, m} = C_{2, m} = C$ on a uniform mesh with cell size $h$. Right: the factor of one cell in \eqref{eq:det_D_product}.}
    \label{fig:exp3_det}
\end{figure}

\clearpage
\bibliographystyle{plain}
\bibliography{bib_1d_paper}

@article{CrandallLions1984,
    author = "Michael G. Crandall and Pierre-Louis Lions",
    title = "Two Approximations of Solutions of {H}amilton-{J}acobi Equations",
    journal = "Mathematics of Computation",
    volume = "43",
    number = "167",
    month = jul,
    year = "1984",
    pages = "1--19"
}

@article{ImbertMonneau,
    author = "Cyril Imbert and R{\'e}gis Monneau",
    title = "Flux-Limited Solutions for Quasi-Convex {H}amilton-{J}acobi Equations on Networks",
    journal = "Annales Scientifiques de l'{\'E}cole Normale Sup{\'e}rieure (4)",
    volume = "50",
    number = "2",
    year = "2017",
    pages = "357--448"
}

@article{Sethian,
    author = "J. A. Sethian",
    title = "Fast Marching Methods",
    journal = "SIAM Review",
    volume = "41",
    number = "2",
    year = "1999",
    pages = "199--235"
}

@article{SchiebornCamilli,
    author = "Dirk Schieborn and Fabio Camilli",
    title = "Viscosity Solutions of {E}ikonal Equations on Topological Networks",
    journal = "Calculus of Variations and Partial Differential Equations",
    volume = "46",
    year = "2013",
    pages = "671--686",
    doi = "10.1007/s00526-012-0498-z"
}

@article{AchdouCamilliCutriTchou2013,
    author = "Yves Achdou and Fabio Camilli and Alessandra Cutr{\`\i} and Nicoletta Tchou",
    title = "{H}amilton-{J}acobi Equations Constrained on Networks",
    journal = "NoDEA Nonlinear Differential Equations and Applications",
    volume = "20",
    year = "2013",
    pages = "413--445"
}

@article{LionsSouganidis2016,
    author = "Pierre-Louis Lions and Panagiotis E. Souganidis",
    title = "Viscosity Solutions for Junctions: Well Posedness and Stability",
    journal = "Atti della Accademia Nazionale dei Lincei. Rendiconti Lincei. Matematica e Applicazioni",
    volume = "27",
    year = "2016",
    pages = "535--545"
}

@article{CamilliMarchiSchieborn2013,
    author = "Fabio Camilli and Claudio Marchi and Dirk Schieborn",
    title = "The Vanishing Viscosity Limit for {H}amilton-{J}acobi Equations on Networks",
    journal = "Journal of Differential Equations",
    volume = "254",
    year = "2013",
    pages = "4122--4143"
}

@article{SiconolfiSorrentino2018,
    author = "Antonio Siconolfi and Alfonso Sorrentino",
    title = "Global Results for Eikonal {H}amilton-{J}acobi Equations on Networks",
    journal = "Analysis \& PDE",
    volume = "11",
    year = "2018",
    pages = "171--211"
}

@article{CamilliFestaSchieborn2013,
    author = "Fabio Camilli and Adriano Festa and Dirk Schieborn",
    title = "An Approximation Scheme for a {H}amilton-{J}acobi Equation Defined on a Network",
    journal = "Applied Numerical Mathematics",
    volume = "73",
    year = "2013",
    pages = "33--47"
}

@article{CostesequeLebacqueMonneau2015,
    author = "Guillaume Costeseque and Jean-Patrick Lebacque and R{\'e}gis Monneau",
    title = "A Convergent Scheme for {H}amilton-{J}acobi Equations on a Junction: Application to Traffic",
    journal = "Numerische Mathematik",
    volume = "129",
    year = "2015",
    pages = "405--447"
}

@article{GuerandKoumaiha2019,
    author = "Jessica Guerand and Marwa Koumaiha",
    title = "Error Estimates for a Finite Difference Scheme Associated with {H}amilton-{J}acobi Equations on a Junction",
    journal = "Numerische Mathematik",
    year = "2019",
    note = "arXiv:1502.07158; volume and pages to be checked"
}

@article{GeeVladimirsky2022,
    author = "Marissa Gee and Alexander Vladimirsky",
    title = "Optimal Path-Planning with Random Breakdowns",
    journal = "IEEE Control Systems Letters",
    year = "2022",
    note = "arXiv:2109.06910; volume and pages to be checked"
}

\end{document}